\title[SYMMETRY AND QUASI-CENTRALITY]{
SYMMETRY AND QUASI-CENTRALITY  OF OPERATOR SPACE PROJECTIVE TENSOR PRODUCT
}
\theoremstyle{plain}
\newtheorem{thm}{\sc Theorem}[section]
\newtheorem{cor}[thm]{\sc Corollary}
\newtheorem{lem}[thm]{\sc Lemma}
\newtheorem{prop}[thm]{\sc Proposition}
\newtheorem{ex}[thm]{\sc Example}
\newenvironment{pf}{\noindent {\sc Proof:}}{\hfill $\Box$}
\begin{document}
\author[A. Kumar]{
AJAY KUMAR }
\address{Department of Mathematics\\
University of Delhi\\
Delhi -110007\\
India.}
\email{akumar@maths.du.ac.in}
\thanks{}
\author[V. Rajpal]{
Vandana Rajpal }
\address{Department of Mathematics\\
University of Delhi\\
Delhi -110007\\
India.}
\email{vandanarajpal.math@gmail.com}
\keywords{Operator space projective tensor norm, Symmetry,  Quasi-central Banach algebra.}
\subjclass[2010]{Primary 46L06, Secondary 46L07,47L25.}
\begin{abstract}
For $C^{*}$-algebras $A$ and $B$,  the operator space projective tensor product $A\widehat{\otimes}B$ and the
Banach space projective tensor product $A\otimes_{\gamma}B$ are  shown to be symmetric. We also show that  $A\widehat{\otimes}B$
is weakly Wiener algebra. Finally,   quasi-centrality, and the unitary group of $A\widehat{\otimes}B$ are discussed.
\end{abstract}
\maketitle
\section{Introduction}
For a Hilbert space $H$, let $B(H)$ denote the space of all bounded operators on $H$. An operator space $X$ on $H$ is just a closed subspace of $B(H)$~\cite{effros}. For operator spaces $X$ and $Y$, and $u$ an element in the algebraic tensor product $X\otimes Y$,  the operator space projective tensor norm is defined to be\\
 \hspace*{2.3 cm }$\|u\|_{\wedge}=\inf\{\|\alpha\|\|x\|\|y\|\|\beta\|:u=\alpha(x\otimes y)\beta\},$\\
where $\alpha\in M_{1,pq}$, $\beta\in M_{pq,1}$, $x\in M_{p}(X) $ and $y\in M_{q}(Y)$, $p,q\in \mathbb{N}$, and
$x\otimes y=(x_{ij}\otimes y_{kl})_{(i,k),(j,l)}\in M_{pq}(X\otimes Y)$.
The completion of $X\otimes Y$ with respect to this norm is called the operator space
projective tensor product of $X$ and $Y$, and is denoted by $X\widehat{\otimes}Y$. It is well known that,
for $C^{*}$-algebras $A$ and $B$, $A\widehat{\otimes}B$ is a Banach $^{*}$-algebra under the natural involution~\cite{kumar}.
The main objective of this paper is to study properties like symmetry, weak Wiener, quasi-centrality of $A\widehat{\otimes}B$. This $^{*}$-algebra has acquired more attention because
$\|\cdot\|_{\wedge}$ is the largest operator space cross-norm smaller than the Banach space projective norm $\|\cdot\|_{\gamma}$, and there are not sufficient informations available about   $A\otimes_{\gamma} B$.

The notion of symmetry was orginated by  D. A. Raikov in 1946 in the context of algebras with involution  in order to generalize the $C^{*}$-algebra results of Gelfand and Naimark. Later, several authors studied this concept in the context of $L^{1}$-convolution algebra $L^{1}(G)$ of a locally compact group $G$, e.g. see~\cite{leptin}, and~\cite{ludwig}. It is still not known whether the projective tensor product of two symmetric
Banach $^{*}$-algebras is symmetric or not. Kugler in 1979   showed  that if one factor is a type I
$C^{*}$-algebra and other is a symmetric Banach $^{*}$-algebra, then their Banach space projective tensor product
is symmetric~\cite{kugler}.  In  Section 2, we show that the Banach space projective tensor product
  and the operator space projective tensor product  of two $C^{*}$-algebras are symmetric. Symmetry of
 $A\widehat{\otimes}B$ is then used to prove that, for separable $C^{*}$-algebras $A$ and $B$,
$Prim^{*}(A\widehat{\otimes}B)$, the set of kernels of topologically irreducible $^{*}$-representations
of $A\widehat{\otimes}B$, and $Prim(A\widehat{\otimes}B)$, the set of kernels of algebraically irreducible
representations of $A\widehat{\otimes}B$, coincides.    We also show that   $A\widehat{\otimes} B$ is weakly Wiener. Section 3 is devoted to the  quasi-centrality of $A\widehat{\otimes} B$, and $A\otimes_{h} B$, the Haagerup tensor product of $A$ and $B$. Finally, we discuss the unitary group of $A\widehat{\otimes}B$.

Recall that the Haagerup norm on the algebraic tensor product of two $C^{*}$-algebras $A$ and $B$ is defined, for $u\in A\otimes B$, by\\
\hspace*{3 cm}$\|u\|_{h}=\inf\{\|\displaystyle\sum_{i=1}^{n}a_{i}a_{i}^{*}\|^{1/2}\|\displaystyle\sum_{i=1}^{n}b_{i}^{*}b_{i}\|^{1/2}: u=\displaystyle\sum_{i=1}^{n}a_{i}\otimes b_{i} \}$.\\
The Haagerup tensor product $A\otimes_{h}B$ is defined to be the completion of $A\otimes B$ in the norm $\|\cdot\|_{h}$~\cite{effros}.

\section{Symmetry And Weak Wiener Property}
Throughout this paper, for an algebra $A$, $Z(A)$ denotes the center of $A$. By an ideal of $A$
we shall always mean a  two-sided ideal. The set of all closed ideals (resp., proper closed ideals) of $A$ is denoted
by $Id(A)$ (resp., $Id'(A)$), and the set of all primitive ideals of $A$ by $Prim(A)$. By a primitive ideal of $A$, we mean a two sided ideal $I$ of $A$ which is the quotient of a maximal modular left ideal, i.e. $I=L:A=\{a\in A: \;\;aA \subseteq L\}$, for some maximal modular left ideal $L$. $Prim(A)$ is endowed with the hull-kernel topology (hk-topology).  The algebra $A$ is said to be primitive if the zero ideal is a primitive ideal.  In a similar manner, one can define the hk-topology on $Prime(A)$, the space of all prime ideals of $A$, and on $Max(A)$, the set of all maximal modular ideals of $A$.

For a $^{*}$-algebra $A$, let $Prim^{*}(A)$ denote  the set of kernels of topologically irreducible $^{*}$-representations of $A$. Equip $Prim^{*}(A)$ with the hk-topology. If $A$ is a $C^{*}$-algebra, then  $Prim(A)=Prim^{*}(A)$.

A Banach $^{*}$-algebra $A$ is said to be symmetric  if every element of the
form $a^{*}a$ has non-negative spectrum. Ford and Shirali proved that, for a Banach $^{*}$-algebra $A$, this is equivalent to  being a Hermitian, i.e. self-adjoint elements of $A$ have real spectrum. Equivalently, from (~\cite{hermi}, Theorem 1), a Banach $^{*}$-algebra $A$ is symmetric if and only if every proper modular left ideal of $A$ is annihilated by a non-zero positive linear functional. It is well known that every $C^{*}$-algebra is symmetric.

It was shown in (~\cite{ranjana}, Theorem 16) that for a subhomogeneous $C^{*}$-algebra $A$, $A\widehat{\otimes} B$ is symmetric for any $C^{*}$-algebra $B$. In the following, we show that the result is true in general.

\begin{thm}\label{sym1}
For $C^{*}$-algebras $A$ and $B$,  the Banach $^{*}$-algebra $A\widehat{\otimes} B$ is symmetric.
\end{thm}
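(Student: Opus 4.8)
The plan is to apply the criterion recalled above---that a Banach $^{*}$-algebra is symmetric if and only if every proper modular left ideal is annihilated by a non-zero positive linear functional---to $D:=A\widehat{\otimes}B$. So let $L$ be a proper modular left ideal of $D$ and, by Zorn's lemma, enlarge it to a maximal modular left ideal $M$; then $M$ is automatically closed and its modular unit $e$ satisfies $e\notin M$. It suffices to produce a non-zero positive functional on $D$ vanishing on $M$, since it then also vanishes on $L$.

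The first step is to identify the $^{*}$-representations of $D$ with pairs $(\rho_{A},\rho_{B})$ of $^{*}$-representations of $A$ and $B$ having commuting ranges. Given such a pair on a Hilbert space $H$, the bilinear map $(a,b)\mapsto\rho_{A}(a)\rho_{B}(b)$ into $B(H)$ is jointly completely contractive, so by the universal property of $\widehat{\otimes}$ it extends to a complete contraction $D\to B(H)$ which is readily checked to be a $^{*}$-homomorphism; conversely, from a non-degenerate $^{*}$-representation $\pi$ of $D$ one recovers $\rho_{A}$ and $\rho_{B}$ as strong limits of $\pi(a\otimes e_{\lambda})$ and $\pi(f_{\mu}\otimes b)$ along bounded approximate identities $(e_{\lambda})\subset A$, $(f_{\mu})\subset B$. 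Hence the enveloping $C^{*}$-algebra of $D$ is $A\otimes_{\max}B$, and the canonical $^{*}$-homomorphism $\theta\colon D\to A\otimes_{\max}B$ has dense range; pulling back along $\theta$ carries positive functionals of $A\otimes_{\max}B$ to positive functionals of $D$, and non-zero ones to non-zero ones since positive functionals on a $C^{*}$-algebra are bounded.

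Next I would transfer the problem to the $C^{*}$-algebra $C:=A\otimes_{\max}B$, which is symmetric since every $C^{*}$-algebra is. One checks that $\overline{\theta(M)}$ is a closed modular left ideal of $C$ with modular unit $\theta(e)$: indeed $z-z\theta(e)\in\theta(M)$ for $z\in\theta(D)$, and this extends by continuity to all $z\in C$. If $\overline{\theta(M)}$ is a \emph{proper} ideal of $C$, then by the symmetry criterion applied to $C$ there is a non-zero positive functional $g$ on $C$ with $g|_{\overline{\theta(M)}}=0$, and $f:=g\circ\theta$ is then a non-zero positive functional on $D$ with $f|_{M}=0$, which completes the proof.

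The crux---and the step I expect to be the genuine obstacle---is precisely the properness of $\overline{\theta(M)}$: since $\|\cdot\|_{\wedge}$ is strictly larger than the $C^{*}$-norm on $A\otimes B$, a priori $\theta(M)$ could be dense in $C$ even though $M$ is proper in $D$; equivalently, one must rule out algebraically irreducible representations of $D$ that fail to be unitarizable. I would approach this through the algebraically irreducible representation $\pi_{M}$ of $D$ on $D/M$ attached to $M$: pass to the primitive ideal $P=M:D$ and the self-adjoint closed ideal $P\cap P^{*}$, reduce to a primitive quotient, and exploit the available description of the ideal and representation structure of $A\widehat{\otimes}B$ together with the already-known subhomogeneous case (~\cite{ranjana}, Theorem 16) via an approximation/filtration argument. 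An equivalent formulation, using the Ford--Shirali characterization, is that it suffices to show $u^{2}+t$ is invertible in the unitization $D^{+}$ for every self-adjoint $u\in D$ and $t>0$; since $\theta(u^{2}+t)\ge t>0$ is invertible in $C^{+}$, this once more reduces to showing that $\theta$ is spectrum-preserving on (positive) self-adjoint elements, which is the heart of the matter.
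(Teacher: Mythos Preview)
Your overall strategy---push a proper modular left ideal forward to a $C^{*}$-tensor product of $A$ and $B$, invoke symmetry there, and pull the annihilating positive functional back---is exactly the route the paper takes. The paper maps into $A\otimes_{\min}B$ via the injective $^{*}$-homomorphism $i$ rather than into $A\otimes_{\max}B$, but structurally the two set-ups are parallel, and your verification that the pushed-forward ideal is modular with unit $\theta(e)$ matches the paper's.

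Where your proposal stalls is precisely where you say it does: you do not establish that $\overline{\theta(M)}$ is proper, and neither of your suggested attacks closes the gap. The spectral reformulation is circular: for a Banach $^{*}$-algebra the canonical map to its enveloping $C^{*}$-algebra is spectrum-preserving on self-adjoint elements if and only if the algebra is Hermitian, which by Shirali--Ford is exactly symmetry---so this is the conclusion restated, not a route to it. The primitive-ideal/subhomogeneous sketch is too vague to carry weight, and in any case the available ideal theory of $A\widehat{\otimes}B$ describes $Prim^{*}$, not algebraically irreducible representations, before symmetry is known.

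The paper's resolution is short and uses an idea absent from your outline. Assuming for contradiction that $\overline{i(L)}=A\otimes_{\min}B$, choose by Hahn--Banach a functional $w\in(A\widehat{\otimes}B)^{*}$ with $w(L)=0$ but $w(u)\neq 0$ (possible since the modular unit $u$ lies outside $\operatorname{cl}(L)$). Then $\ker w\supseteq L$ is $\min$-dense, so its $\min$-closure contains every elementary tensor; invoking \cite{kumar}, Theorem~6, one concludes that the $\|\cdot\|_{\wedge}$-closed subspace $\ker w$ itself already contains every elementary tensor, forcing $w=0$, a contradiction. The missing ingredient in your argument is thus a concrete ``elementary tensors cannot escape under $\min$-closure'' principle specific to $A\widehat{\otimes}B$, rather than anything about irreducible representations or spectra.
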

\begin{pf}
Consider a proper modular left ideal $L$  of $A\widehat{\otimes} B$.  One can assume that $L\neq \{0\}$.  Let  $u\in A\widehat{\otimes} B$ be a right modular unit. We  will show that $\overline{i(L)}$, i.e. min-closure of $i(L)$, is a proper modular left ideal of $A\otimes_{\min} B$ with the right modular unit $i(u)$, where  $i:A\widehat{\otimes}B \to A \otimes_{\min}B$ is the canonical injective $^{*}$-homomorphism~\cite{ranj}. Clearly,  $\overline{i(L)}$ is a left ideal in $A\otimes_{\min} B$.  It is easy to see that
$i(A\widehat{\otimes} B)$ is $\|\cdot\|_{min}$-dense in $A\otimes_{\min} B$. So, for a given $y\in A\otimes_{\min} B$, there exists a sequence $y_{n}\in A\widehat{\otimes} B$ such that $\|y-i(y_{n})\|_{\min}\to 0$. Since $u$ is a right modular unit, so $x-xu\in L $ for all $x\in A\widehat{\otimes} B$. In particular, $i(y_{n})- i(y_{n})i(u)\in i(L)$, and $i(y_{n})-i(y_{n})i(u)$ converges to $y-yi(u)$ in $\|\cdot\|_{\min}$-norm, so $y-yi(u)\in\overline{ i(L)}$, proving that  $\overline{i(L)}$ is a modular left ideal in $A\otimes_{\min} B$. Now, assume that $\overline{ i(L)}=A\otimes_{\min} B$. Since $L$ is a proper modular left ideal in $A\widehat{\otimes} B$, so cl$(L)$ is  also a proper modular left ideal with the right modular unit $u$~\cite{dunc}. Thus $u\notin \text{cl}(L)$ and so,  by the Hahn Banach theorem, there exists $w\in (A\widehat{\otimes} B)^{*}$ such that  $w(u)\neq 0$, and $w(L)=\{0\}$. That is, $L\subseteq \ker(w)$, which  further implies  that $\ker(w)_{\min} = A\otimes_{\min} B$, where $\ker(w)_{\min}$ is the min-closure of $\ker (w)$. Thus $\ker(w)_{\min}$ contains all elementary tensors. Using (~\cite{kumar}, Theorem 6), it follows that  $\ker(w)$ contains all the elementary tensors. Hence $\ker(w)=A\widehat{\otimes} B$. In particular, $w(u)=0$, a contradiction. Hence $\overline{i(L)}$ is a proper modular left ideal of $A\otimes_{\min} B$, and so there exists a non-zero positive linear functional, say $f$ on $A\otimes_{\min} B$ and hence continuous, such that $f(\overline{ i(L)})=\{0\}$, giving that $L\subseteq \ker(f\circ i)$. Since  $i$ is a $^{*}$-homomorphism, so $f\circ i$ is a positive linear functional on $A\widehat{\otimes} B$.  Using the continuity of the functional $f$,  it is easy to verify that  $f\circ i$ is non-zero. Hence  $A\widehat{\otimes} B$ is symmetric.
\end{pf}

\begin{cor}\label{sym11}
Every $P\in Prim(A\widehat{\otimes} B)$ is of the form $P=A\widehat{\otimes} J+I\widehat{\otimes} B$, for some $I\in Prime(A)$ and $J\in Prime(B)$.
\end{cor}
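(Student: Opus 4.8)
The plan is to exploit the symmetry of $A\widehat{\otimes} B$ established in Theorem~\ref{sym1}, together with the structure theory of ideals in an operator space projective tensor product of $C^*$-algebras. Since $A\widehat{\otimes} B$ is symmetric, and hence Hermitian, the ideal theory behaves well: in particular, every primitive ideal $P$ is the kernel of a topologically irreducible $^*$-representation, so $P \in Prim^*(A\widehat{\otimes} B)$, and $P$ is a prime closed ideal. First I would recall (or invoke from the literature on $A\widehat{\otimes} B$, e.g.~\cite{kumar},~\cite{ranj}) that every closed prime ideal of $A\widehat{\otimes} B$ has the form $A\widehat{\otimes} J + I\widehat{\otimes} B$ for some closed ideals $I$ of $A$ and $J$ of $B$; this uses the description of closed ideals of the tensor product as coming from the two factors, which in turn rests on the injectivity of the map $i$ and on the fact that $A\widehat{\otimes} B$ sits between $A\otimes_\gamma B$ and $A\otimes_{\min} B$.

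The key steps, in order, are: (1) take $P \in Prim(A\widehat{\otimes} B)$; by symmetry and the remarks preceding Theorem~\ref{sym1}, identify $P$ with an element of $Prim^*(A\widehat{\otimes} B)$, hence $P$ is a closed prime two-sided ideal. (2) Apply the ideal-structure result to write $P = A\widehat{\otimes} J + I\widehat{\otimes} B$ with $I \in Id(A)$, $J \in Id(B)$. (3) Show $I$ is prime in $A$ and $J$ is prime in $B$: if $I_1 I_2 \subseteq I$ for closed ideals $I_1, I_2$ of $A$, then $(I_1\widehat{\otimes} B)(I_2\widehat{\otimes} B) \subseteq I\widehat{\otimes} B \subseteq P$, and since $I_i\widehat{\otimes} B$ are closed ideals of $A\widehat{\otimes} B$ and $P$ is prime, one of them lies in $P$; intersecting back with $A$ (via slice maps or the canonical quotient $A\widehat{\otimes} B \to A \cong A\widehat{\otimes} \mathbb{C}$ when $B$ is unital, or by a state-slicing argument in general) forces $I_1 \subseteq I$ or $I_2 \subseteq I$. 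The symmetric argument handles $J$.

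The main obstacle I expect is step (3), specifically the ``intersecting back'' part: recovering primeness of the factor ideals from primeness of $P$ requires a clean way to detect that $I_i \widehat{\otimes} B \subseteq A\widehat{\otimes} J + I\widehat{\otimes} B$ implies $I_i \subseteq I$. When $B$ is unital this is immediate by applying a slice map coming from a state (or by the quotient onto $A$), but for non-unital $B$ one must argue via approximate identities or via the faithfulness of the family of slice maps $R_\varphi \colon A\widehat{\otimes} B \to A$ for $\varphi \in B^*$, ensuring these interact correctly with the $\widehat{\otimes}$-norm. A secondary subtlety is justifying that $P$ being algebraically primitive (not just topologically) already gives the prime closed ideal form; this is exactly where the equality $Prim(A\widehat{\otimes} B) = Prim^*(A\widehat{\otimes} B)$ — itself a consequence of symmetry, as announced in the introduction — is used. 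Once these points are in place, the corollary follows by assembling steps (1)–(3).
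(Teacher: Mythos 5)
Your step (1) is exactly the paper's opening move: symmetry of $A\widehat{\otimes}B$ (Theorem \ref{sym1}) together with the general theory of symmetric Banach $^*$-algebras ((\cite{palmer1}, Theorem 11.5.1)) gives $Prim(A\widehat{\otimes}B)\subseteq Prim^{*}(A\widehat{\otimes}B)$. The gap is in step (2). You propose to ``recall from the literature'' (citing \cite{kumar} and \cite{ranj}) that every closed prime ideal of $A\widehat{\otimes}B$ has the form $A\widehat{\otimes}J+I\widehat{\otimes}B$; neither of those references contains such a statement, and in that generality it is not something you can simply invoke --- identifying arbitrary closed prime ideals of $A\widehat{\otimes}B$ with product ideals is essentially the content of the equality $Prime(A\widehat{\otimes}B)=Prim^{*}(A\widehat{\otimes}B)$, which this paper only establishes later and only for separable $A$ and $B$. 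So as written, the decisive decomposition step is assumed rather than proved. The result that is actually available, and that the paper quotes, is (\cite{ranjana}, Theorem 7): every kernel of a topologically irreducible $^*$-representation of $A\widehat{\otimes}B$, i.e.\ every element of $Prim^{*}(A\widehat{\otimes}B)$, equals $A\widehat{\otimes}J+I\widehat{\otimes}B$ with $I$ and $J$ prime. Since your step (1) already places $P$ in $Prim^{*}(A\widehat{\otimes}B)$, quoting that theorem finishes the proof in one line and also hands you the primeness of $I$ and $J$, making your step (3) redundant.

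Your step (3) itself is sound, for what it is worth: granted a decomposition $P=A\widehat{\otimes}J+I\widehat{\otimes}B$ with closed ideals (and the closedness of this sum), the slice-map argument works even for non-unital $B$ --- pick $b\notin J$ and, by Hahn--Banach, $\varphi\in B^{*}$ with $\varphi(J)=\{0\}$ and $\varphi(b)\neq 0$; then the right slice map $R_{\varphi}$ sends $P$ into $I$, so $I_{1}\widehat{\otimes}B\subseteq P$ forces $I_{1}\subseteq I$. But this only re-derives information that the correct citation already supplies. To repair the proposal, replace the unsupported claim about all closed prime ideals by the statement of (\cite{ranjana}, Theorem 7) applied to $P\in Prim^{*}(A\widehat{\otimes}B)$; with that substitution your argument coincides with the paper's.
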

\begin{pf}
By Theorem \ref{sym1} above, and (~\cite{palmer1}, Theorem 11.5.1), we have $Prim(A\widehat{\otimes} B)\subseteq Prim^{*}(A\widehat{\otimes} B)$. So, for a given $P\in Prim(A\widehat{\otimes} B)$ there exist $I\in Prime(A)$ and $J\in Prime(B)$ such that $P=A\widehat{\otimes} J+I\widehat{\otimes} B$, by (~\cite{ranjana}, Theorem 7).
\end{pf}

Now, we proceed to show that the Banach space projective tensor product $A\otimes_{\gamma} B$ is symmetric. Using (~\cite{haag}, Proposition 2), one can verify that the identity map $i$ from $A\otimes_{\gamma} B$ into $A\otimes_{\min} B$ is injective.  The following result for the Banach space projective tensor product of von Neumann algebras can be proved on the similar lines as that in (~\cite{sinc}, Lemma 4.2) for the Haagerup tensor product. However, we outline the proof for the sake of completeness.
\begin{lem}\label{a122}
Let $M$ and $N$ be von Neumann algebras and let $L$ be a closed ideal in $M\otimes_{\gamma}N$. If $1\otimes1 \in L_{\min}$, the closure of $L$ in $\|\cdot\|_{\min}$, then $1\otimes1 \in L$, and hence $L$ equals $M\otimes_{\gamma}N$.
\end{lem}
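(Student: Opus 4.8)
The plan is to prove that $1\otimes 1\in L$; since $1\otimes 1$ is the identity of the unital Banach algebra $M\otimes_{\gamma}N$, this forces $L=M\otimes_{\gamma}N$. I would first recast the hypothesis. As $i(L)$ is a two-sided ideal of the $\|\cdot\|_{\min}$-dense subalgebra $i(M\otimes_{\gamma}N)$ of the $C^{*}$-algebra $M\otimes_{\min}N$, its $\|\cdot\|_{\min}$-closure $L_{\min}$ is a closed two-sided ideal of $M\otimes_{\min}N$; hence ``$1\otimes 1\in L_{\min}$'' says precisely that $i(L)$ is $\|\cdot\|_{\min}$-dense in $M\otimes_{\min}N$. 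Accordingly choose $v\in L$ with $\|1\otimes 1-i(v)\|_{\min}<1/2$, so $i(v)$ is invertible in $M\otimes_{\min}N$. Since $L$ is two-sided and $M\otimes_{\gamma}N$ is a Banach $^{*}$-algebra, $vv^{*}\in L$ is self-adjoint and $i(vv^{*})=i(v)i(v)^{*}$ is positive and invertible in $M\otimes_{\min}N$, say $i(vv^{*})\geq\delta\,(1\otimes 1)$ for some $\delta>0$. Replacing $v$ by $vv^{*}$, we may assume $v=v^{*}\in L$ with $i(v)$ strictly positive and invertible in $M\otimes_{\min}N$; it then suffices to show that $v$ is invertible \emph{in $M\otimes_{\gamma}N$}, since in that case $1\otimes 1=v\cdot v^{-1}\in L$.

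The remaining point is a spectral-permanence statement, and this is where I would follow (\cite{sinc}, Lemma 4.2), transcribing the argument from the Haagerup norm to the projective norm: one must show that invertibility of $i(v)$ in $M\otimes_{\min}N$ forces invertibility of $v$ in $M\otimes_{\gamma}N$. Concretely, I would represent $M\otimes_{\min}N$ spatially inside $B(H\,\bar\otimes\,K)$, write down $i(v)^{-1}$ there, and use the tools available over von Neumann algebras --- bounded left and right slice maps $M\,\bar\otimes\,N\to M,\ N$ and the attendant Fubini/slice-map identities, together with the injectivity of $i$ recorded just before the Lemma --- to verify that $i(v)^{-1}$ lies in $i(M\otimes_{\gamma}N)$ and that its preimage is a two-sided inverse of $v$ in $M\otimes_{\gamma}N$. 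The only thing that must be set up afresh is the $\gamma$-analogue of the factorization used by Sinclair and Smith; granting it, the argument is essentially theirs, and the closing assertion $L=M\otimes_{\gamma}N$ follows.

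I expect the main obstacle to be exactly this spectral-permanence step: it is not formal, since an injective contractive $^{*}$-homomorphism need not preserve non-invertibility, and it is precisely here that the hypothesis ``von Neumann algebra'' (rather than ``$C^{*}$-algebra'') is used --- the normality/slice-map machinery is what keeps $i(v)^{-1}$ inside $M\otimes_{\gamma}N$. A purely functional-analytic shortcut --- take $\phi\in(M\otimes_{\gamma}N)^{*}$ vanishing on $L$, observe that for each $x$ the functional $z\mapsto\phi(xz)$ also vanishes on $L$, and try to evaluate it on the $\|\cdot\|_{\min}$-limit $1\otimes 1$ --- breaks down because such functionals are not $\|\cdot\|_{\min}$-continuous in general, so they cannot be tested on that limit; thus the von Neumann structure genuinely has to be exploited, and producing the inverse explicitly appears to be the workable route.
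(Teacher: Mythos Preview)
Your outline identifies the right target---spectral permanence for an element of $L$ whose image in $M\otimes_{\min}N$ is invertible---but the method you propose for that step does not match what is actually in \cite{sinc}, Lemma~4.2, and as written it does not go through. Slice maps and Fubini-type identities pin down membership in the von Neumann tensor product $M\,\bar\otimes\,N$, not in $M\otimes_{\gamma}N$; there is no slice-map characterization of $M\otimes_{\gamma}N$ that would let you conclude $i(v)^{-1}\in i(M\otimes_{\gamma}N)$ from knowledge of its slices. So the sentence ``use \ldots\ slice maps \ldots\ to verify that $i(v)^{-1}$ lies in $i(M\otimes_{\gamma}N)$'' is precisely the gap, and it is not a matter of setting up a $\gamma$-analogue of an existing factorization---the tool simply does not see the projective norm.

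The device that both \cite{sinc} and the present paper use is different: the \emph{Dixmier approximation theorem}. One writes $u=\sum_{j}a_{j}\otimes b_{j}\in L$ with $\|i(u)-1\otimes 1\|_{\min}<1/2$, and then averages each $a_{j}$ (resp.\ $b_{j}$) by convex combinations $\alpha_{n}$ (resp.\ $\beta_{n}$) of inner automorphisms of $M$ (resp.\ $N$) so that $\alpha_{n}(a_{j})\to v_{j}\in Z(M)$ and $\beta_{n}(b_{j})\to w_{j}\in Z(N)$. The maps $\alpha_{n}\otimes_{\gamma}\beta_{n}$ are contractions on $M\otimes_{\gamma}N$ that leave the ideal $L$ invariant and fix $1\otimes 1$ on the $\min$ side; passing to the limit produces $z=\sum_{j}v_{j}\otimes w_{j}\in L\cap\bigl(Z(M)\otimes_{\gamma}Z(N)\bigr)$ with $\|i(z)-1\otimes 1\|_{\min}<1$. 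Now spectral permanence is needed only for the \emph{commutative} pair $Z(M)\otimes_{\gamma}Z(N)\hookrightarrow Z(M)\otimes_{\min}Z(N)$, and there it holds because $Z(M)\otimes_{\gamma}Z(N)$ is a semisimple regular commutative Banach algebra (cf.\ \cite{kaniuth}). Hence $z$ is invertible in $Z(M)\otimes_{\gamma}Z(N)$ and $1\otimes 1=z\,z^{-1}\in L$. The von Neumann hypothesis enters through Dixmier averaging, not through slice maps; your self-adjoint reduction $v\mapsto vv^{*}$ is harmless but unnecessary once the averaging is in place.
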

\begin{pf}
Since $1\otimes1 \in L_{\min}$, so for a given $\delta=\dfrac{1}{2}$, there exists  $u\in L$ such that  $\|i(u)-1\otimes1\|_{\min}< \dfrac{1}{2}$. Let $u=\displaystyle\sum_{j=1}^{\infty} a_{j}\otimes b_{j}$ be a norm convergent representation in $M\otimes_{\gamma}N$ with $\displaystyle\sum_{j=1}^{\infty}\|a_{j}\|\|b_{j}\|< \infty$~\cite{dunc}. By  Dixmier Approximation theorem, there exist sequences $\{v_{j}\}_{j=1}^{\infty}\in Z(M)$, $\{w_{j}\}_{j=1}^{\infty}\in Z(N)$,
$\{\alpha_{n}\}_{n=1}^{\infty}\in P(M)$ and $\{\beta_{n}\}_{n=1}^{\infty}\in P(N)$ such that
\begin{eqnarray}\label{b3}
  \displaystyle\lim_{n\rightarrow \infty} \|\alpha_{n}(a_{j})-v_{j}\|=\displaystyle\lim_{n\rightarrow \infty} \|\beta_{n}(b_{j})-w_{j}\|=0\notag
\end{eqnarray}
for all $j\geq 1$.\\
For each $n\in \mathbb{N}$, define a map $\alpha_{n}\times \beta_{n}: M\times N \to M\otimes_{\gamma}N $ as
$\alpha_{n}\times \beta_{n}(a,b)=\alpha_{n}(a)\otimes \beta_{n}(b)$ for all $a\in M$, and $b\in N$.
So we obtain a unique operator, say $\alpha_{n}\otimes_{\gamma} \beta_{n}$, on $M\otimes_{\gamma}N $ and $L$  is invariant under $\alpha_{n}\otimes_{\gamma} \beta_{n}$ for each $n$. Now as in~\cite{sinc}, the partial sums of $\displaystyle\sum_{j=1}^{\infty} v_{j}\otimes w_{j}$ form a Cauchy sequence in $Z(M)\otimes _{\gamma}Z(N)$ and so we may define an element  $z= \displaystyle\sum_{j=1}^{\infty} v_{j}\otimes w_{j}\in Z(M)\otimes _{\gamma}Z(N)$. Note that $Z(M)\otimes _{\gamma}Z(N)$ is a closed $^{*}$-subalgebra of $M\otimes _{\gamma}N$.
Now for $\epsilon > 0$, choose $m$ such that $\|\displaystyle\sum_{j=m+1}^{\infty} v_{j}\otimes w_{j}\|_{\gamma}\:, \:\|\displaystyle\sum_{j=m+1}^{\infty} a_{j}\otimes b_{j}\|_{\gamma}< \dfrac{\epsilon}{3}$. Thus, for sufficiently large choice of $n$, we have
\begin{align}\label{b4}
\| \alpha_{n}\otimes_{\gamma} \beta_{n}(u)-z\|_{\gamma}&< \epsilon.
\end{align}
So invariance of  $L$ under $\alpha_{n}\otimes_{\gamma} \beta_{n}$  implies that $z\in L $. Consider $\alpha_{n}\otimes \beta_{n}:M\otimes N \to M\otimes_{\min}N$, then clearly $\| \alpha_{n}\otimes \beta_{n}(\displaystyle\sum_{j=1}^{k}m_{j}\otimes n_{j})\|_{\min}\leq \|\displaystyle\sum_{j=1}^ {k}m_{j}\otimes n_{j}\|_{\min}$.  So $\alpha_{n}\otimes \beta_{n}$
can be extended to $M\otimes_{\min}N$. Let the extension be denoted by $ \alpha_{n}\otimes_{\min} \beta_{n} $. It is easy to show that
$i\circ (\alpha_{n}\otimes_{\gamma} \beta_{n})=(\alpha_{n}\otimes_{\min} \beta_{n})\circ i$. So
\begin{align}\label{a21}
 \|(\alpha_{n}\otimes_{\min} \beta_{n})(i(u))-1\otimes1\|_{\min}&\leq \|i(u)-1\otimes1\|_{\min}< \dfrac{1}{2}.
\end{align}
By (\ref{b4}), we have
\begin{align}\label{a23}
 \|i\circ (\alpha_{n}\otimes_{\gamma} \beta_{n})(u)-i(z)\|_{\min}< \epsilon, \text{ for sufficiently large n}.
\end{align}
Thus,
\begin{align}\label{a1009}
 \|i(z)-1\otimes 1\|_{\min}\leq \dfrac{1}{2}< 1.
\end{align}
Now, consider the map $i|_{Z(M)\otimes _{\gamma}Z(N)}$. Clearly, $i|_{Z(M)\otimes _{\gamma}Z(N)}$ is a map from
$Z(M)\otimes _{\gamma}Z(N)$ into $Z(M)\otimes _{\min}Z(N)$. Since $i$ is injective, so is $i|_{Z(M)\otimes _{\gamma}Z(N)}$. By (\ref{a1009}), $i(z)$ is invertible in $Z(M)\otimes _{\min}Z(N)$, so
$0\notin Spec_{Z(M)\otimes _{\min}Z(N)}(i(z)) $. Since $Z(M)\otimes _{\gamma}Z(N)$  is semisimple and regular, so $0\notin Spec_{Z(M)\otimes _{\gamma}Z(N)}(z) $,  by~\cite{kaniuth}. Hence
$z$ is invertible in $Z(M)\otimes _{\gamma}Z(N)$, so there exists $w\in Z(M)\otimes _{\gamma}Z(N)$ such that $zw=wz=1\otimes 1$. Thus, $1\otimes 1\in L$.
\end{pf}

\begin{thm}
For $C^{*}$-algebras $A$ and $B$,  the Banach $^{*}$-algebra $A\otimes_{\gamma} B$ is symmetric.
\end{thm}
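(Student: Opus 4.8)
The plan is to run the argument of Theorem~\ref{sym1} a second time, with Lemma~\ref{a122} taking the place of the operator-space input \cite[Theorem~6]{kumar}. Let $L$ be a proper modular left ideal of $A\otimes_{\gamma}B$ with right modular unit $u$, and let $i\colon A\otimes_{\gamma}B\to A\otimes_{\min}B$ be the canonical injective $^{*}$-homomorphism, injective by \cite[Proposition~2]{haag}. Exactly as in Theorem~\ref{sym1}, the density of $i(A\otimes_{\gamma}B)$ in $A\otimes_{\min}B$ shows that the min-closure $\overline{i(L)}$ is a modular left ideal of the $C^{*}$-algebra $A\otimes_{\min}B$ with right modular unit $i(u)$. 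The point at which the two proofs genuinely diverge is the verification that $\overline{i(L)}$ is \emph{proper}. Granting that for the moment, the proof closes cleanly: a proper closed left ideal of a $C^{*}$-algebra is contained in the left kernel $\{x:f(x^{*}x)=0\}$ of some state $f$, and since such an $f$ annihilates its own left kernel (Cauchy--Schwarz), $f\circ i$ is a non-zero positive linear functional on $A\otimes_{\gamma}B$ vanishing on $L$; by the Ford--Shirali/Hermitian criterion \cite[Theorem~1]{hermi} quoted above, $A\otimes_{\gamma}B$ is then symmetric.

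So everything reduces to showing $\overline{i(L)}\neq A\otimes_{\min}B$. I would first replace $A$ and $B$ by their unitizations: $A\otimes_{\gamma}B$ is a closed $^{*}$-ideal of $A^{+}\otimes_{\gamma}B^{+}$ (the inclusion is a topological embedding because $A$ is complemented in $A^{+}$ via the augmentation character), and symmetry is inherited by closed $^{*}$-ideals, so it suffices to treat unital $A,B$; then $A\otimes_{\gamma}B$ is unital, $L$ is an ordinary proper closed left ideal, and the claim becomes $1_{A}\otimes 1_{B}\notin\overline{i(L)}$. Suppose this fails, and pass to the von Neumann algebras $M=A^{**}$, $N=B^{**}$. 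The canonical maps $A\otimes_{\gamma}B\to M\otimes_{\gamma}N$ and $A\otimes_{\min}B\hookrightarrow M\otimes_{\min}N$ are injective and intertwine the two copies of $i$: the second is isometric because $\otimes_{\min}$ is injective on $C^{*}$-algebras, and the first is injective --- in fact a topological embedding --- because every bounded operator from a Banach space into a dual space extends to the bidual, which dualises to injectivity, applied first on one leg and then on the other. Transporting $L$ through these maps, the min-closure in $M\otimes_{\min}N$ of the image of $L$ contains $1_{M}\otimes 1_{N}$; this is precisely the situation Lemma~\ref{a122} is designed to contradict, once one has the correct two-sided ideal of $M\otimes_{\gamma}N$ to apply it to.

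The main obstacle is exactly that last clause. Lemma~\ref{a122} concerns closed \emph{two-sided} ideals of $M\otimes_{\gamma}N$ --- its Dixmier-averaging proof uses invariance under two-sided unitary-conjugation averages --- whereas $L$ is only a one-sided modular left ideal, and the two-sided ideal generated by a proper left ideal may be the whole algebra (already for $L=\{x\in M_{n}(\mathbb{C}):xe_{1}=0\}$), so one cannot merely feed ``the ideal generated by $L$'' into the lemma. The route I would pursue is to reduce first to a maximal modular left ideal $L$, consider the associated primitive ideal $P=L:(A\otimes_{\gamma}B)$ and its description via prime ideals of $A$ and $B$ in the spirit of \cite[Theorem~7]{ranjana}, and then re-run the averaging computation of Lemma~\ref{a122} relative to the topologically irreducible representation $(A\otimes_{\gamma}B)/\overline{L}$ rather than relative to a two-sided ideal, aiming to force an invertible element into a proper left ideal --- an absurdity. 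Reconciling the one-sided ideal structure with the two-sided averaging, and in particular identifying which averaging operators actually preserve the relevant left ideal, is where I expect the real work of the theorem to lie.
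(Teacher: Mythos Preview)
Your strategy is the paper's: reduce to von Neumann algebras and re-run Theorem~\ref{sym1} with Lemma~\ref{a122} replacing \cite[Theorem~6]{kumar}. The paper's reduction is cleaner than yours, however. It does not unitise or transport $L$ into a larger algebra; instead it uses \cite[Lemma~5.2]{Ajay} to embed $A\otimes_\gamma B$ isometrically as a closed $^*$-subalgebra of $A^{**}\otimes_\gamma B^{**}$, argues that $M\otimes_\gamma N$ is symmetric for all von Neumann algebras $M,N$, and then invokes the elementary fact that closed $^*$-subalgebras of hermitian Banach $^*$-algebras are hermitian (for $h=h^{*}\in B\subseteq A$ one has $\partial\,Spec_{B}(h)\subseteq Spec_{A}(h)\subseteq\mathbb{R}$, and a compact subset of $\mathbb{C}$ whose boundary is real must itself be real). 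Your unitisation detour is harmless but unnecessary, since biduals are already unital, and there is no need to push the particular left ideal $L$ up into $M\otimes_\gamma N$ at all.

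As for the obstacle you flag --- that Lemma~\ref{a122} is stated for closed two-sided ideals while the Theorem~\ref{sym1} template produces only a left ideal $L$ and then the hyperplane $\ker(w)$ --- the paper does not address this: it writes only ``arguments similar to that of Theorem~\ref{sym1}''. Note that in Theorem~\ref{sym1} the cited input \cite[Theorem~6]{kumar} is applied to the closed subspace $\ker(w)$, not to $L$, so the genuine $\otimes_\gamma$-analogue would likewise have to be a subspace statement rather than an ideal statement; Lemma~\ref{a122} as written is not a literal drop-in, and the paper leaves whatever bridging is required to the reader. Your instinct that something extra is needed here is therefore sound, but your proposed route through primitive ideals and one-sided Dixmier averaging goes well beyond anything the paper's proof actually indicates.
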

\begin{pf}
From (~\cite{Ajay}, Lemma 5.2), we know that the natural embedding $ i_{A}\otimes i_{B}$ of $A\otimes_{\gamma} B$ into $A^{**}\otimes_{\gamma} B^{**}$ is  isometry. So $A\otimes_{\gamma} B$ can be regarded as a closed $^{*}$-subalgebra of $A^{**}\otimes_{\gamma} B^{**}$. Using Lemma \ref{a122}  and arguments similar to that of  Theorem \ref{sym1}, it follows that, for any von Neumann algebras $M$ and $N$, $M\otimes_{\gamma} N$ is symmetric. Thus  $A\otimes_{\gamma} B$ is symmetric.
\end{pf}

Now we look at the structure of $Prim(A\widehat{\otimes}B)$.

\begin{thm}\label{sym2}
For $C^{*}$-algebras $A$ and $B$, let $P\in Prim(A)$ and $Q\in Prim(B)$ then $A\widehat{\otimes}Q+P\widehat{\otimes}B\in Prim(A\widehat{\otimes}B)$.
\end{thm}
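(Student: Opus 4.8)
The plan is to reduce the statement to the case of primitive factors and then produce an explicit faithful, algebraically irreducible representation of $A\widehat{\otimes}B$ on a Hilbert space tensor product.

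\smallskip
First I would set $I=A\widehat{\otimes}Q+P\widehat{\otimes}B$ and let $q_A\colon A\to A/P$, $q_B\colon B\to B/Q$ be the quotient maps. Using the structure theory of the operator space projective tensor product of $C^{*}$-algebras (the results cited for Corollary~\ref{sym11}, e.g.~\cite{ranjana} and~\cite{kumar}), $A\widehat{\otimes}Q$ is a closed ideal of $A\widehat{\otimes}B$, the complete quotient map $q_A\widehat{\otimes}q_B\colon A\widehat{\otimes}B\to (A/P)\widehat{\otimes}(B/Q)$ has kernel exactly $I$, and hence $I$ is closed with $(A\widehat{\otimes}B)/I\cong (A/P)\widehat{\otimes}(B/Q)$. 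Since an ideal $I$ of a Banach algebra $R$ is primitive precisely when $\{0\}$ is a primitive ideal of $R/I$, and since $A/P$ and $B/Q$ are primitive $C^{*}$-algebras, it is enough to prove: if $A$ and $B$ are primitive $C^{*}$-algebras, then $A\widehat{\otimes}B$ is a primitive Banach $^{*}$-algebra.

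\smallskip
So assume $A$ and $B$ primitive and choose faithful irreducible representations $\pi\colon A\to B(H_A)$ and $\rho\colon B\to B(H_B)$. The minimal tensor product $\pi\otimes\rho\colon A\otimes_{\min}B\to B(H_A\otimes H_B)$ is faithful, by the defining property of the spatial norm. Composing with the canonical injective $^{*}$-homomorphism $i\colon A\widehat{\otimes}B\to A\otimes_{\min}B$ of~\cite{ranj} gives a $^{*}$-representation $\Pi=(\pi\otimes\rho)\circ i$ of $A\widehat{\otimes}B$ on $H_A\otimes H_B$ with $\ker\Pi=\{0\}$. It then remains only to check that $\Pi$ is algebraically irreducible; granting this, $\{0\}$ is the kernel of a faithful algebraically irreducible representation, hence a primitive ideal of $A\widehat{\otimes}B$, and the reduction is complete (and, consistently with~\cite{ranjana}, $\Pi$ is in particular topologically irreducible).

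\smallskip
For algebraic irreducibility, observe that $\Pi(A\widehat{\otimes}B)$ contains the $^{*}$-subalgebra $\mathcal D=\Pi(A\otimes B)=\mathrm{span}\{\pi(a)\otimes\rho(b):a\in A,\ b\in B\}$ of $B(H_A\otimes H_B)$, and it suffices to show that $\mathcal D$ itself acts algebraically irreducibly. Given $0\neq\xi\in H_A\otimes H_B$, write $\xi=\sum_{k=1}^{n}\eta_k\otimes\zeta_k$ with $\eta_1,\dots,\eta_n$ linearly independent in $H_A$ and $\zeta_1,\dots,\zeta_n$ linearly independent in $H_B$. By the Kadison transitivity theorem for the irreducible representation $\pi$ there is $a\in A$ with $\pi(a)\eta_1=\eta_1$ and $\pi(a)\eta_k=0$ for $k\geq 2$, whence $(\pi(a)\otimes\rho(b))\xi=\eta_1\otimes\rho(b)\zeta_1$ for every $b\in B$; since $\rho$ is irreducible and $\zeta_1\neq 0$ we get $\eta_1\otimes\zeta\in\mathcal D\xi$ for all $\zeta\in H_B$, in particular $\eta_1\otimes\zeta_1\in\mathcal D\xi$. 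Now for an arbitrary $\sum_{j=1}^{m}\eta'_j\otimes\zeta'_j$, choose $a_j\in A$, $b_j\in B$ with $\pi(a_j)\eta_1=\eta'_j$ and $\rho(b_j)\zeta_1=\zeta'_j$; then $\sum_{j}(\pi(a_j)\otimes\rho(b_j))(\eta_1\otimes\zeta_1)=\sum_{j}\eta'_j\otimes\zeta'_j$, and since $\mathcal D$ is an algebra with $\eta_1\otimes\zeta_1\in\mathcal D\xi$ this yields $\mathcal D\xi=H_A\otimes H_B$. Hence $\Pi(A\widehat{\otimes}B)\xi=H_A\otimes H_B$ for every nonzero $\xi$, so $\Pi$ is algebraically irreducible (and nondegenerate), giving $\{0\}\in Prim(A\widehat{\otimes}B)$. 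The hard part will be exactly this last step — upgrading topological to genuine algebraic irreducibility by running the transitivity argument through the non-closed subalgebra $\mathcal D$ rather than through the full $C^{*}$-algebra $A\otimes_{\min}B$ — together with a careful check that $q_A\widehat{\otimes}q_B$ has kernel precisely $A\widehat{\otimes}Q+P\widehat{\otimes}B$, so that this sum is closed and the quotient is $(A/P)\widehat{\otimes}(B/Q)$.
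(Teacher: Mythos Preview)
Your reduction to primitive factors via the isomorphism $(A\widehat{\otimes}B)/(A\widehat{\otimes}Q+P\widehat{\otimes}B)\cong (A/P)\widehat{\otimes}(B/Q)$ is correct and is exactly how the paper begins. The genuine gap is in the algebraic irreducibility step. Your transitivity argument only treats vectors $\xi$ lying in the \emph{algebraic} tensor product $H_A\otimes_{\mathrm{alg}}H_B$: the finite decomposition $\xi=\sum_{k=1}^{n}\eta_k\otimes\zeta_k$ is unavailable for a general vector of the Hilbert tensor product (any infinite-Schmidt-rank vector), and Kadison transitivity, which controls only finitely many vectors at a time, does not supply an $a\in A$ with $\pi(a)\eta_1=\eta_1$ and $\pi(a)\eta_k=0$ for infinitely many $k$. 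Even for finite-rank $\xi$, your computation gives only $\mathcal D\xi=H_A\otimes_{\mathrm{alg}}H_B$, not the full Hilbert space, so it does not show that $\Pi$ is algebraically irreducible on $H_A\otimes H_B$. Retreating to the algebraic tensor product of the Hilbert spaces does not help either: $\mathcal D=\Pi(A\otimes B)$ acts there, but $A\widehat{\otimes}B$ does not, so you have produced a faithful simple module for $A\otimes B$, not for $A\widehat{\otimes}B$. In short, you have established topological irreducibility of $\Pi$, which for a non-$C^{*}$ Banach $^{*}$-algebra is strictly weaker than what is required.

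The paper avoids the Hilbert-space picture altogether and argues inside $A\widehat{\otimes}B$. For unital primitive $A,B$ it fixes maximal left ideals $M\subset A$, $N\subset B$ with $M{:}A=N{:}B=\{0\}$, and shows that any left ideal containing $A\widehat{\otimes}N+M\widehat{\otimes}B$ together with a nonzero closed two-sided ideal must be all of $A\widehat{\otimes}B$; the key input here is that every nonzero closed ideal of $A\widehat{\otimes}B$ contains a nonzero elementary tensor, which lets one manufacture $1_A\otimes 1_B$ from such an ideal plus $M\widehat{\otimes}B+A\widehat{\otimes}N$. One then chooses a maximal left ideal $T$ of $A\widehat{\otimes}B$ above $A\widehat{\otimes}N+M\widehat{\otimes}B$ (existence uses the Haagerup tensor product to see the sum is proper) and concludes $T{:}(A\widehat{\otimes}B)=\{0\}$. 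This construction of a maximal left ideal of $A\widehat{\otimes}B$ with zero quotient is precisely the idea your approach is missing.
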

\begin{pf}
From (~\cite{ranjana}, Lemma 2) and (~\cite{ranja}, Proposition 3.5), we know that there exists an isometric isomorphism from  $(A\widehat{\otimes}B)/(P\widehat{\otimes}B+A\widehat{\otimes}Q)$ onto $(A/P)\widehat{\otimes}(B/Q)$. Since $P\in Prim(A)$ and $Q\in Prim(B)$, so $A/P$ and $B/Q$ are  primitive $C^{*}$-algebras. Therefore it is enough to show that, for  primitive   $C^{*}$-algebras $A$ and $B$, $A\widehat{\otimes}B$ is a primitive Banach algebra. Suppose first that $A$ and $B$ are unital $C^{*}$-algebras with identities $1_{A}$ and $1_{B}$, respectively. Since  $A$ and $B$ are  primitive   $C^{*}$-algebras, so there exist maximal modular left ideal $M$ in $A$, and $N$ in $B$ such that $\{0\}=M:A$, and  $\{0\}=N:B$. Since the maximal modular left ideal in a Banach algebra is norm closed~\cite{dunc}, so $A\widehat{\otimes}N$ and $M\widehat{\otimes}B$ are well-defined. Now, we show that if $S$ is any left ideal of  $A\widehat{\otimes}B$ containing both $A\widehat{\otimes}N+M\widehat{\otimes}B$, and a non-zero closed ideal $I$ of $A\widehat{\otimes}B$ then $S=A\widehat{\otimes}B$. Since $I$ is a non-zero closed ideal of
$A\widehat{\otimes}B$,  so it would contain a non-zero elementary tensor, say $a\otimes b$, by (~\cite{ranja}, Proposition 3.6). Thus the closed ideal generated by $a\otimes b$ is also contained in $I$, so is contained in $S$. Since $a\otimes b$  is non-zero, so both $a$ and $b$ are non-zeroes. Therefore $a\notin M:A$ and $b\notin N:B$, i.e. $aA\nsubseteq M $ and $bB\nsubseteq N $, which further implies that $<a>$ and $<b>$ are not contained in $M$ and $N$, respectively. Thus there exist $c, d \in A$ and $e, f \in B$ such that $cad \notin M$ and $ebf \notin N$. Now consider $A (cad)+M$ and $B (ebf)+N$. Since $cad \notin M$ and $ebf \notin N$, and so, by the maximality of $M$ and $N$, we have $A (cad)+M=A$ and $B (ebf)+N=B$.  So there exist $g \in A$, $m \in M$, $h \in B$, and $n \in N$ such that $g(cad) + m = 1_{A}$, and $h(ebf) + n = 1_{B}$. Therefore, we have\\
\hspace*{2.8 cm} $1_{A}\otimes 1_{B}=(g(cad) + m) \otimes (h(ebf) + n)$\\
\hspace*{4.4 cm}$=gcad\otimes hebf +gcad\otimes n+m\otimes 1_{B}$\\
\hspace*{4.4 cm}$=(gc\otimes he)(a \otimes b)(d \otimes f) +m\otimes 1_{B}+(1_{A}\otimes n-m\otimes n)$\\
\hspace*{4.4 cm}$\in I+S\subseteq S$\\
Hence $S=A\widehat{\otimes}B$. One can verify that  $A\widehat{\otimes} N+ M\widehat{\otimes} B\subseteq (A\otimes_{h} N+M\otimes_{h} B) \cap A\widehat{\otimes} B$. By the injectivity of the Haagerup norm, $A\otimes_{h} N+M\otimes_{h} B$ is a  left ideal of $A\otimes_{h} B$. Clearly, it is proper, so is contained in some maximal left ideal, say $R$. Then $A\widehat{\otimes} N+ M\widehat{\otimes} B\subseteq R\cap A\widehat{\otimes} B$. It is easy to see that  $R\cap A\widehat{\otimes} B$ is a proper left ideal of $A\widehat{\otimes} B$. So let $T$ be a maximal left ideal of $A\widehat{\otimes}B$ containing $R\cap A\widehat{\otimes} B$. We show that $T:A\widehat{\otimes}B=\{0\}$. Let $0 \neq x\in T:A\widehat{\otimes}B$, then $x A\widehat{\otimes}B\subseteq T$. Thus $<x>\subseteq T$, and so, by the above,  $T=A\widehat{\otimes}B$, a contradiction. Hence, for unital primitive $C^{*}$-algebras $A$ and $B$, $A\widehat{\otimes}B$ is a primitive Banach algebra.

If $A$ has an identity, but $B$ does not have. Consider the unitization $B_{e}$ of  $B$. Clearly,
$B$ is a closed ideal of $B_{e}$. Therefore, $A\widehat{\otimes} B$ is a closed ideal of $A\widehat{\otimes} B_{e}$
by (~\cite{kumar}, Theorem 5). Now using the fact that unitization of  a primitive $C^{*}$-algebra   is  a primitive
$C^{*}$-algebra and the above argument, $A\widehat{\otimes} B_{e}$ is a primitive Banach algebra, i.e. $\{0\}$ is a
primitive ideal of $A\widehat{\otimes} B_{e}$. So, there exists an algebraic irreducible representation $\pi$ of $A\widehat{\otimes} B_{e}$ such that $\ker \pi=\{0\}$. By (~\cite{palmer1}, Theorem 4.1.6),  either $\pi|_{ A\widehat{\otimes} B}$ is irreducible or $\pi| _{A\widehat{\otimes} B}=\{0\}$. If $\pi|_{ A\widehat{\otimes} B}=\{0\}$, then $A\widehat{\otimes} B$ is zero, which is not true, and so $\pi|_{ A\widehat{\otimes} B}$ is irreducible.
Also $\ker \pi|_{ A\widehat{\otimes} B}=\ker \pi \cap A\widehat{\otimes} B=\{0\}$. Hence the result follows in this case. In case, $A$ and $B$ do not have identities, then one may consider $A_{e}\widehat{\otimes} B_{e}$ and apply the similar technique to obtain the result.
\end{pf}

\begin{cor}
For separable $C^{*}$-algebras $A$, $B$, we have $Prim(A\widehat{\otimes} B)=Prim^{*}(A\widehat{\otimes} B)=Prime(A\widehat{\otimes} B)$.
\end{cor}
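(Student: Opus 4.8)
The plan is to prove the three equalities by establishing the cyclic chain
$Prim(A\widehat{\otimes}B)\subseteq Prim^{*}(A\widehat{\otimes}B)\subseteq Prime(A\widehat{\otimes}B)\subseteq Prim(A\widehat{\otimes}B)$,
where the first two inclusions hold for arbitrary $C^{*}$-algebras and only the last one uses separability.

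For the first inclusion I would reuse the argument from the proof of Corollary~\ref{sym11}: since $A\widehat{\otimes}B$ is symmetric by Theorem~\ref{sym1}, (~\cite{palmer1}, Theorem 11.5.1) gives $Prim(A\widehat{\otimes}B)\subseteq Prim^{*}(A\widehat{\otimes}B)$. For the second, I would check that the kernel of any topologically irreducible $^{*}$-representation $\pi$ of $A\widehat{\otimes}B$ on a Hilbert space $\mathcal{H}$ is prime: if $I,J\in Id(A\widehat{\otimes}B)$ satisfy $ab\in\ker\pi$ for all $a\in I$, $b\in J$, and $\pi(J)\neq 0$, then $\overline{\pi(J)\mathcal{H}}$ is a non-zero closed $\pi$-invariant subspace of $\mathcal{H}$ (invariance because $J$ is an ideal), hence equals $\mathcal{H}$ by topological irreducibility; since $\pi(a)\pi(b)=\pi(ab)=0$ for all such $a,b$, the operator $\pi(a)$ annihilates the dense subspace $\pi(J)\mathcal{H}$ and therefore $\pi(a)=0$, i.e.\ $I\subseteq\ker\pi$. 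Thus $Prim^{*}(A\widehat{\otimes}B)\subseteq Prime(A\widehat{\otimes}B)$.

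The substantive step is $Prime(A\widehat{\otimes}B)\subseteq Prim(A\widehat{\otimes}B)$. Given $P\in Prime(A\widehat{\otimes}B)$, I would invoke the description of the closed prime ideals of $A\widehat{\otimes}B$ from (~\cite{ranjana}, Theorem 7) to write $P=A\widehat{\otimes}J+I\widehat{\otimes}B$ for some $I\in Prime(A)$ and $J\in Prime(B)$. The separability hypothesis then enters: since $A$ and $B$ are separable $C^{*}$-algebras, every closed prime ideal of $A$ (resp.\ of $B$) is primitive, a classical fact due to Dixmier, so in fact $I\in Prim(A)$ and $J\in Prim(B)$. Theorem~\ref{sym2} now applies directly and yields $P=A\widehat{\otimes}J+I\widehat{\otimes}B\in Prim(A\widehat{\otimes}B)$, closing the chain and giving equality of all three sets (the hull--kernel topologies then automatically agree, being determined by the common underlying set of ideals).

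I expect the main obstacle to be simply having the correct auxiliary input in place, namely a structure theorem guaranteeing that \emph{every} closed prime ideal of $A\widehat{\otimes}B$ --- not merely those realised as kernels of $^{*}$-representations --- has the slice-sum form $A\widehat{\otimes}J+I\widehat{\otimes}B$; granting that and Theorem~\ref{sym2} (which supplies the converse, that such sums built from primitive $I$ and $J$ are primitive), the separability of the factors does the rest by upgrading ``prime'' to ``primitive'' in $A$ and $B$. Were the cited description available only for $Prim^{*}$-ideals, one would first need $Prime(A\widehat{\otimes}B)\subseteq Prim^{*}(A\widehat{\otimes}B)$, which does not follow from symmetry alone and would demand extra argument.
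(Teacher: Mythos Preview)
Your cyclic chain $Prim\subseteq Prim^{*}\subseteq Prime\subseteq Prim$ is sound in outline, but it routes differently from the paper, and the worry you raise in your final paragraph is exactly the point of divergence. The paper instead shows $Prim^{*}(A\widehat{\otimes}B)\subseteq Prim(A\widehat{\otimes}B)$ directly: given $P\in Prim^{*}(A\widehat{\otimes}B)$, (\cite{ranjana}, Theorem~7) yields the slice form $P=A\widehat{\otimes}J+I\widehat{\otimes}B$ with $I\in Prim^{*}(A)=Prim(A)$ and $J\in Prim^{*}(B)=Prim(B)$ (no separability needed here, since $Prim=Prim^{*}$ in any $C^{*}$-algebra), and Theorem~\ref{sym2} then gives $P\in Prim(A\widehat{\otimes}B)$. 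Together with the symmetry inclusion $Prim\subseteq Prim^{*}$ from Corollary~\ref{sym11}, this establishes $Prim=Prim^{*}$ without ever decomposing an arbitrary closed prime ideal. The remaining equality $Prim^{*}=Prime$ is then simply quoted from (\cite{ranjana}, Theorem~6), which is where separability enters.

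So your anticipated obstacle is real: as invoked in this paper, the slice-sum description from (\cite{ranjana}, Theorem~7) is applied to $Prim^{*}$-ideals, not to arbitrary closed primes, and your step $Prime\subseteq Prim$ would need either the stronger structural input you flag or the preliminary equality $Prime=Prim^{*}$. The paper's ordering buys precisely this: by proving $Prim^{*}\subseteq Prim$ first (where the decomposition is available) and deferring $Prime=Prim^{*}$ to an external citation, it never has to analyse general prime ideals of $A\widehat{\otimes}B$. Your elementary argument for $Prim^{*}\subseteq Prime$ is correct but superfluous in the paper's scheme.
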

\begin{pf}
Let $P\in Prim^{*}(A\widehat{\otimes} B)$, then $P=A\widehat{\otimes} J+I\widehat{\otimes} B$ for $I\in Prim^{*}(A)$ and $J\in Prim^{*}(B)$ by (~\cite{ranjana}, Theorem 7), and so, by the Theorem \ref{sym2}, $P\in Prim(A\widehat{\otimes} B)$. Thus $Prim^{*}(A\widehat{\otimes} B)$ $\subseteq Prim(A\widehat{\otimes} B)$. As noted in the  Corollary \ref{sym11}, the reverse containment follows. Note that the other equality follows immediately
from (~\cite{ranjana}, Theorem 6).
\end{pf}

Recall that a Banach algebra $A$  is said to be weakly Wiener if every proper closed ideal of it is contained in  a some primitive ideal.
Note that every $C^{*}$-algebra, and a  Banach algebra with unity is weakly Wiener. Also every symmetric Banach $^{*}$-algebra, which is
weakly Wiener, has the Wiener property, i.e. every proper closed ideal is annihilated by a topologically
irreducible $^{*}$-representation. It has been shown in (~\cite{ranjana}, Theorem 10) that $A\widehat{\otimes} B$ has the  Wiener property.
\begin{cor}\label{sym5}
The Banach algebra $A\widehat{\otimes} B$ is weakly Wiener.
\end{cor}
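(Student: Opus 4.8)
The plan is to read this off from the Wiener property of $A\widehat{\otimes}B$ established in (~\cite{ranjana}, Theorem 10), combined with the identification of $Prim^{*}(A\widehat{\otimes}B)$ with $Prim(A\widehat{\otimes}B)$ that the material above already yields. Let $I$ be a proper closed ideal of $A\widehat{\otimes}B$. By the Wiener property there is a topologically irreducible $^{*}$-representation $\pi$ of $A\widehat{\otimes}B$ annihilating $I$; since $\pi\neq 0$, the ideal $P:=\ker\pi$ is a proper closed ideal, it lies in $Prim^{*}(A\widehat{\otimes}B)$, and $I\subseteq P$. Hence it suffices to show that $P$ is primitive, for then $I$ is contained in a primitive ideal, which is exactly the assertion that $A\widehat{\otimes}B$ is weakly Wiener.

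To see that $P\in Prim(A\widehat{\otimes}B)$ I would repeat the bookkeeping from the proof of the corollary following Theorem \ref{sym2}: by (~\cite{ranjana}, Theorem 7), $P=A\widehat{\otimes}J+K\widehat{\otimes}B$ for some $K\in Prim^{*}(A)$ and $J\in Prim^{*}(B)$; as $A$ and $B$ are $C^{*}$-algebras, $Prim^{*}(A)=Prim(A)$ and $Prim^{*}(B)=Prim(B)$, so $K\in Prim(A)$ and $J\in Prim(B)$, and Theorem \ref{sym2} then gives $P=A\widehat{\otimes}J+K\widehat{\otimes}B\in Prim(A\widehat{\otimes}B)$. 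Equivalently, one may record the inclusion $Prim^{*}(A\widehat{\otimes}B)\subseteq Prim(A\widehat{\otimes}B)$ — the converse of Corollary \ref{sym11} — once and for all from these same two inputs, and apply it to $P$.

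The only substantive ingredient is this inclusion $Prim^{*}(A\widehat{\otimes}B)\subseteq Prim(A\widehat{\otimes}B)$, and its content is already in hand: the analytic core is Theorem \ref{sym2} (reducing primitivity of $A\widehat{\otimes}B$ to the case of primitive $C^{*}$-factors, then to an explicit argument with maximal modular left ideals of $A\widehat{\otimes}B$ and the injectivity of the Haagerup norm), while (~\cite{ranjana}, Theorem 7) supplies the description of the kernels of topologically irreducible $^{*}$-representations of the tensor product, and symmetry of $A\widehat{\otimes}B$ (Theorem \ref{sym1}) underlies the reverse inclusion used in Corollary \ref{sym11}. Beyond invoking these, nothing is left to do, so I do not anticipate a genuine obstacle; the only point worth double-checking is that the cited statements (~\cite{ranjana}, Theorems 7 and 10) apply in the generality needed here, and, if either requires separability of $A$ and $B$, that the hypothesis of the corollary be adjusted accordingly.
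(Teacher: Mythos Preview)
Your argument is correct provided Theorem 7 of \cite{ranjana} delivers, for general (not necessarily separable) $C^{*}$-algebras, the decomposition $P=A\widehat{\otimes}J+K\widehat{\otimes}B$ with $K\in Prim^{*}(A)$ and $J\in Prim^{*}(B)$ --- exactly the point you flag at the end. The paper, however, takes a different and more elementary route that sidesteps this concern entirely: it passes to the unitizations $A_{e}$, $B_{e}$ and observes that $A\widehat{\otimes}B$ sits as a closed ideal with bounded approximate identity inside the unital algebra $A_{e}\widehat{\otimes}B_{e}$, so a nonzero proper closed ideal $I$ of $A\widehat{\otimes}B$ is also a proper closed ideal of $A_{e}\widehat{\otimes}B_{e}$; since unital Banach algebras are trivially weakly Wiener, $I\subseteq\ker\pi$ for some algebraically irreducible representation $\pi$ of $A_{e}\widehat{\otimes}B_{e}$, and the standard restriction lemma (\cite{palmer1}, Theorem 4.1.6) forces $\pi|_{A\widehat{\otimes}B}$ to be irreducible (the alternative $\pi|_{A\widehat{\otimes}B}=0$ being excluded because $I\neq\{0\}$), with $I\subseteq\ker(\pi|_{A\widehat{\otimes}B})$. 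The case $I=\{0\}$ is handled by a single appeal to Theorem \ref{sym2}. Thus the paper invokes neither the Wiener property from \cite{ranjana} nor the structure of $Prim^{*}(A\widehat{\otimes}B)$, and needs no separability assumption. Your approach has the virtue of packaging the result as an immediate consequence of the inclusion $Prim^{*}(A\widehat{\otimes}B)\subseteq Prim(A\widehat{\otimes}B)$, which is of independent interest; the paper's approach is shorter, uses less machinery, and is manifestly general.
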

\begin{pf}
In case  $A$ and $B$ have unity then the result is trivial. It is sufficient to prove the result when   $A$  and $B$ are non-unital.  Consider the unitization $A_{e}$ of  $A$ and $B_{e}$ of  $B$.  Let $I$ be a proper closed ideal of $A\widehat{\otimes} B$. If $I=\{0\}$, the result follows directly by Theorem \ref{sym2}, and so suppose that  $I\neq \{0\}$. Since $A\widehat{\otimes} B$ is a closed ideal of $A_{e}\widehat{\otimes} B_{e}$ with bounded approximate identity, so $I$ is a closed ideal of $A_{e}\widehat{\otimes} B_{e}$. Clearly, $I$ is a proper ideal of $A_{e}\widehat{\otimes} B_{e}$. Thus, there exists an algebraic irreducible representation, say $\pi$,
of  $A_{e}\widehat{\otimes} B_{e}$ such that $I\subseteq \ker \pi$,
which further implies that $I\subseteq \ker \pi| _{A\widehat{\otimes} B}$.
By (~\cite{palmer1}, Theorem 4.1.6), either $\pi|_{ A\widehat{\otimes} B}$ is irreducible
or $\pi|_{ A\widehat{\otimes} B}=\{0\}$. If $\pi| _{A\widehat{\otimes} B}=\{0\}$, then $I$ is zero,
which is not true, and so $\pi|_{ A\widehat{\otimes} B}$ is irreducible. Hence $A\widehat{\otimes} B$ is weakly Wiener.
\end{pf}

Note that the above result can also be proved  for the Banach algebra $A\otimes_{h} B$ using (~\cite{spectral}, Theorem 2.2).

A Banach algebra $A$ is said to be completely regular if $Max(A)$ satisfies: (i) $Max(A)$ is Hausdorff;
(ii) For every $M \in  Max(A) $, there is an open set $U$ in $Max(A)$ containing $M$ such that $k(U)$ is modular ideal.

The following lemma can be proved on the similar lines as done in Lemma 1.3~\cite{arc} for the Haagerup tensor product.
\begin{lem}\label{a123}
Let $I_{0}$ and $J_{0}$ be closed ideals in $C^{*}$-algebras $A$ and $B$, respectively. Let $S\subseteq Id(A)$, $T\subseteq Id(B)$ be such that
$k(S)=I_{0}$ and $k(T)=J_{0}$. Then $A\widehat{\otimes} J_{0}+I_{0}\widehat{\otimes} B=\bigcap \{A\widehat{\otimes} J+I\widehat{\otimes} B : I\in S, J\in T\} $.
\end{lem}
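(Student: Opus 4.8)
The plan is to prove the trivial inclusion directly and to obtain the other one by transporting the problem to the minimal $C^{*}$-tensor product $A\otimes_{\min}B$, where ideals of the shape ``$A\otimes J+I\otimes B$'' are governed by Tomiyama's slice maps, following the strategy of Lemma~1.3 of~\cite{arc} for the Haagerup tensor product. For ``$\subseteq$'': if $I\in S$ and $J\in T$, then $I_{0}=k(S)\subseteq I$ and $J_{0}=k(T)\subseteq J$; since for a closed ideal $K\trianglelefteq B$ (resp.\ $L\trianglelefteq A$) the space $A\widehat{\otimes}K$ (resp.\ $L\widehat{\otimes}B$) sits inside $A\widehat{\otimes}B$ as a closed ideal, compatibly with inclusions of ideals, by~\cite{kumar} and~\cite{ranja}, we get $A\widehat{\otimes}J_{0}+I_{0}\widehat{\otimes}B\subseteq A\widehat{\otimes}J+I\widehat{\otimes}B$; intersecting over $I\in S$, $J\in T$ gives the inclusion.

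For the reverse inclusion, fix $u$ in the right-hand intersection. Because the canonical $^{*}$-homomorphism $i\colon A\widehat{\otimes}B\to A\otimes_{\min}B$ of~\cite{ranj} fixes elementary tensors and is $\|\cdot\|_{\min}$-continuous, $i(A\widehat{\otimes}J)\subseteq A\otimes_{\min}J$ and $i(I\widehat{\otimes}B)\subseteq I\otimes_{\min}B$, hence $i(u)\in A\otimes_{\min}J+I\otimes_{\min}B$ for all $I\in S$, $J\in T$. I claim the $C^{*}$-level identity
\[ \bigcap\bigl\{A\otimes_{\min}J+I\otimes_{\min}B : I\in S,\ J\in T\bigr\}=A\otimes_{\min}J_{0}+I_{0}\otimes_{\min}B. \]
Granting it, $i(u)\in A\otimes_{\min}J_{0}+I_{0}\otimes_{\min}B=\ker(q_{I_{0}}\otimes_{\min}q_{J_{0}})$, where $q_{I_{0}}\colon A\to A/I_{0}$ and $q_{J_{0}}\colon B\to B/J_{0}$ are the quotient maps. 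In the commuting square built from $q_{I_{0}}\widehat{\otimes}q_{J_{0}}$, $q_{I_{0}}\otimes_{\min}q_{J_{0}}$ and the canonical injections $A\widehat{\otimes}B\to A\otimes_{\min}B$ and $(A/I_{0})\widehat{\otimes}(B/J_{0})\to(A/I_{0})\otimes_{\min}(B/J_{0})$, injectivity of the latter map forces $(q_{I_{0}}\widehat{\otimes}q_{J_{0}})(u)=0$; thus $u\in\ker(q_{I_{0}}\widehat{\otimes}q_{J_{0}})=A\widehat{\otimes}J_{0}+I_{0}\widehat{\otimes}B$ by the isometric identification $(A\widehat{\otimes}B)/(A\widehat{\otimes}J_{0}+I_{0}\widehat{\otimes}B)\cong(A/I_{0})\widehat{\otimes}(B/J_{0})$ (\cite{ranjana}, Lemma~2; \cite{ranja}, Proposition~3.5).

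To prove the displayed identity only ``$\subseteq$'' needs argument. Standard facts about the minimal tensor product give $\ker(q_{I}\otimes_{\min}q_{J})=A\otimes_{\min}J+I\otimes_{\min}B$, and then Tomiyama's slice-map theorem yields, for $x\in A\otimes_{\min}B$, that $x\in A\otimes_{\min}J+I\otimes_{\min}B$ if and only if $L_{\phi}(x)\in J$ for every $\phi\in A^{*}$ that vanishes on $I$, where $L_{\phi}$ is the left slice map. Now fix $x$ in the intersection and fix $J\in T$, and set $C_{J}=\{\phi\in A^{*}: L_{\phi}(x)\in J\}$. This is a linear subspace of $A^{*}$; moreover $\phi\mapsto L_{\phi}(x)$ is continuous from the $w^{*}$-topology to the norm topology on norm-bounded subsets of $A^{*}$ (being, on such sets, a uniform limit of the corresponding maps attached to elements of the algebraic tensor product $A\otimes B$), so, by the Krein--\v{S}mulian theorem, $C_{J}$ is $w^{*}$-closed. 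By the slice-map criterion and the choice of $x$, $C_{J}$ contains $I^{\perp}\cong(A/I)^{*}$ for every $I\in S$, hence $C_{J}\supseteq\overline{\operatorname{span}}^{\,w^{*}}\bigcup_{I\in S}I^{\perp}=(k(S))^{\perp}=I_{0}^{\perp}$. Therefore $L_{\phi}(x)\in J$ for all $\phi\in I_{0}^{\perp}$ and all $J\in T$, i.e.\ $L_{\phi}(x)\in\bigcap_{J\in T}J=J_{0}$ for all such $\phi$, which by the slice-map criterion means $x\in A\otimes_{\min}J_{0}+I_{0}\otimes_{\min}B$.

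The one genuinely nonroutine step is the last paragraph --- upgrading ``$L_{\phi}(x)\in J$ for $\phi$ in each individual annihilator $I^{\perp}$'' to ``$L_{\phi}(x)\in J$ for $\phi$ in the $w^{*}$-closed span $(k(S))^{\perp}$'' --- which hinges on the $w^{*}$-closedness of $C_{J}$ via Krein--\v{S}mulian. The remaining ingredients (the injection $i$, the slice-map description of $A\otimes_{\min}J+I\otimes_{\min}B$, and the behaviour of $\widehat{\otimes}$ and $\otimes_{\min}$ under quotients by closed ideals) are standard.
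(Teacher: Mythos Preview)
Your overall architecture is right --- the easy inclusion, the slice-map / Krein--\v{S}mulian upgrade from $\bigcup_{I\in S}I^{\perp}$ to $I_{0}^{\perp}=(k(S))^{\perp}$, and the final commuting-square argument are all sound. The gap is in the detour through $A\otimes_{\min}B$. Two of the ``standard facts'' you invoke fail for general $C^{*}$-algebras: the equality $\ker(q_{I}\otimes_{\min}q_{J})=A\otimes_{\min}J+I\otimes_{\min}B$ and the backward implication of the slice-map criterion for $\otimes_{\min}$ (that $L_{\phi}(x)\in J$ for all $\phi\in I^{\perp}$ forces $x\in A\otimes_{\min}J+I\otimes_{\min}B$) are both equivalent to an exactness/slice-map property that is \emph{not} assumed here. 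In particular your displayed $\otimes_{\min}$-identity need not hold, so the argument as written only covers, say, exact $A$ or $B$.

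The fix is small and in fact brings you closer to the approach of Lemma~1.3 of \cite{arc} that the paper points to: run the slice-map argument directly in $A\widehat{\otimes}B$, never passing to $\otimes_{\min}$. For $\phi\in A^{*}$ the left slice $L_{\phi}\colon A\widehat{\otimes}B\to B$ is completely bounded with $\|L_{\phi}\|\le\|\phi\|$, and because $\ker(q_{I}\widehat{\otimes}q_{J})=A\widehat{\otimes}J+I\widehat{\otimes}B$ is genuinely available for $\widehat{\otimes}$ (\cite{ranjana}, Lemma~2; \cite{ranja}, Proposition~3.5), the equivalence ``$u\in A\widehat{\otimes}J+I\widehat{\otimes}B\iff L_{\phi}(u)\in J$ for all $\phi\in I^{\perp}$'' \emph{does} hold here. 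Your Krein--\v{S}mulian step then applies verbatim to $u$ itself and yields $u\in A\widehat{\otimes}J_{0}+I_{0}\widehat{\otimes}B$ directly. (If you prefer to keep the passage to $\otimes_{\min}$, do not assert the intermediate identity: from $L_{\phi}(i(u))\in J_{0}$ for all $\phi\in I_{0}^{\perp}$ conclude immediately that $(q_{I_{0}}\otimes_{\min}q_{J_{0}})(i(u))=0$, since product functionals separate points of $(A/I_{0})\otimes_{\min}(B/J_{0})$, and then invoke your commuting square.)
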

Using the above lemma, and the fact that  the map $\phi$, defined by $\phi(M,N)=A\widehat{\otimes} N+M\widehat{\otimes} B$, is a homeomorphism from $Max(A)\times Max(B)$ onto $Max(A\widehat{\otimes} B)$, which follows from~\cite{ranjana} and~\cite{arc}. We have
\begin{prop}\label{sym9}
For completely regular $C^{*}$-algebras $A$ and $B$, $A\widehat{\otimes} B$ is a completely regular Banach algebra.
\end{prop}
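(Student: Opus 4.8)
The plan is to transport the two defining conditions through the homeomorphism $\phi\colon Max(A)\times Max(B)\to Max(A\widehat{\otimes}B)$, $\phi(M,N)=A\widehat{\otimes}N+M\widehat{\otimes}B$. Condition (i) is immediate: since $A$ and $B$ are completely regular, $Max(A)$ and $Max(B)$ are Hausdorff, hence so is the product $Max(A)\times Max(B)$, and a space homeomorphic to a Hausdorff space is Hausdorff, so $Max(A\widehat{\otimes}B)$ is Hausdorff. For condition (ii), fix $P\in Max(A\widehat{\otimes}B)$ and write $P=\phi(M,N)$ with $M\in Max(A)$, $N\in Max(B)$. Complete regularity of $A$ and $B$ supplies open sets $U$ in $Max(A)$ and $V$ in $Max(B)$ with $M\in U$, $N\in V$, such that $I_{0}:=k(U)$ is a modular ideal of $A$ and $J_{0}:=k(V)$ is a modular ideal of $B$. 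Then $\phi(U\times V)$ is an open neighbourhood of $P$ in $Max(A\widehat{\otimes}B)$, and it only remains to prove that $k(\phi(U\times V))$ is a modular ideal of $A\widehat{\otimes}B$.

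First I would identify this kernel. Since maximal modular ideals are closed, $U\subseteq Id(A)$ and $V\subseteq Id(B)$, so Lemma \ref{a123} applies with $S=U$, $T=V$ (note $k(S)=k(U)=I_{0}$ and $k(T)=k(V)=J_{0}$), and because $\phi$ is a bijection onto its image,
\[
k(\phi(U\times V))=\bigcap\{A\widehat{\otimes}N'+M'\widehat{\otimes}B : M'\in U,\ N'\in V\}=A\widehat{\otimes}J_{0}+I_{0}\widehat{\otimes}B .
\]
In particular this is a closed ideal, being an intersection of closed ideals (equivalently, one may invoke the isometric identification of $(A\widehat{\otimes}B)/(A\widehat{\otimes}J_{0}+I_{0}\widehat{\otimes}B)$ with $(A/I_{0})\widehat{\otimes}(B/J_{0})$ already used in Theorem \ref{sym2}).

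It then remains to exhibit a modular unit. Let $e$ be a two-sided modular unit for $I_{0}$ in $A$ and $f$ one for $J_{0}$ in $B$. For an elementary tensor,
\[
a\otimes b-(a\otimes b)(e\otimes f)=(a-ae)\otimes b+ae\otimes(b-bf)\in I_{0}\widehat{\otimes}B+A\widehat{\otimes}J_{0},
\]
and the symmetric computation handles $a\otimes b-(e\otimes f)(a\otimes b)$. Since $A\widehat{\otimes}J_{0}+I_{0}\widehat{\otimes}B$ is closed, elementary tensors are dense in $A\widehat{\otimes}B$, and multiplication is continuous, it follows that $e\otimes f$ is a two-sided modular unit for $A\widehat{\otimes}J_{0}+I_{0}\widehat{\otimes}B$; hence $k(\phi(U\times V))$ is modular. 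This establishes (ii), so $A\widehat{\otimes}B$ is completely regular. The only points needing care are the bookkeeping that rewrites $k$ of the $\phi$-image as the intersection occurring in Lemma \ref{a123}, and the passage from elementary tensors to all of $A\widehat{\otimes}B$ in the last step, where closedness of the sum ideal is essential; I do not expect a genuine obstacle beyond these.
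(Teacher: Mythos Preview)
Your proof is correct and follows exactly the route the paper indicates: the paper's own proof is just the one-line observation that the result follows from Lemma~\ref{a123} together with the homeomorphism $\phi\colon Max(A)\times Max(B)\to Max(A\widehat{\otimes}B)$, and you have faithfully filled in those details. The only (harmless) imprecision is that it is the linear span of elementary tensors, not the elementary tensors themselves, that is dense, but the modular-unit condition is linear in $x$ so your density argument goes through unchanged.
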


Recall that an algebra  $A$ is said to be  strongly semisimple if $Rad(A)=\{0\}$, where $Rad(A)= \displaystyle\cap_{M\in Max(A)} M$. Using the same argument as in (~\cite{sinc}, Proposition 5.16), it is easy to show that $A\widehat{\otimes} B$ is semisimple. However, $A\widehat{\otimes} B$ is strongly semisimple if and
only if $A$ and $B$ are.  This follows directly by observing that
$Rad(A\widehat{\otimes} B)=A\widehat{\otimes} Rad(B)+Rad(A)\widehat{\otimes} B$, which can be proved
by  using  Lemma \ref{a123} above and (~\cite{ranjana}, Theorem 9).

A Banach algebra  is said to be topologically Tauberian if
every proper closed ideal of it is contained in a some maximal modular ideal. For commutative $C^{*}$-algebras $A$ and $B$,  $A\widehat{\otimes} B$ is a topologically Tauberian Banach algebra by Corollary \ref{sym5}.
\begin{thm}
For topologically Tauberian, completely regular, strongly semisimple  $C^{*}$-algebras $A$ and $B$,  $A\widehat{\otimes} B$ is a topologically Tauberian Banach algebra. Conversely, for $C^{*}$-algebras $A$ and $B$, if $A\widehat{\otimes} B$ is a topologically Tauberian then either $A$ or $B$ is topologically Tauberian.
\end{thm}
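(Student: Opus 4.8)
The plan is to reduce everything to an explicit description of the maximal modular ideals of $A\widehat{\otimes}B$. Concretely, I would first establish
\[
Max(A\widehat{\otimes}B)=\{\,A\widehat{\otimes}N+M\widehat{\otimes}B:\ M\in Max(A),\ N\in Max(B)\,\},
\]
which is essentially the homeomorphism $\phi$ recorded just before Proposition \ref{sym9} (and is where the completely regular / strongly semisimple hypotheses enter, through \cite{ranjana} and \cite{arc}; it can also be obtained directly, as follows). For the inclusion $\supseteq$: if $M\in Max(A)$ and $N\in Max(B)$, then $A/M$ and $B/N$ are simple unital $C^{*}$-algebras, so $(A/M)\otimes_{\min}(B/N)$ is simple; since the canonical map $i\colon (A/M)\widehat{\otimes}(B/N)\to (A/M)\otimes_{\min}(B/N)$ is injective \cite{ranj} and, by (\cite{kumar}, Theorem 6), a closed ideal of $(A/M)\widehat{\otimes}(B/N)$ whose $\min$-closure is everything already contains all elementary tensors (hence is everything), it follows that $(A/M)\widehat{\otimes}(B/N)$ is unital and topologically simple, so $A\widehat{\otimes}N+M\widehat{\otimes}B\in Max(A\widehat{\otimes}B)$ (using the quotient identification $(A\widehat{\otimes}B)/(A\widehat{\otimes}N+M\widehat{\otimes}B)\cong(A/M)\widehat{\otimes}(B/N)$ from the proof of Theorem \ref{sym2}). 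For $\subseteq$: a maximal modular ideal $M'$ of $A\widehat{\otimes}B$ is primitive, hence $M'=A\widehat{\otimes}Q_{0}+P_{0}\widehat{\otimes}B$ with $P_{0}\in Prime(A)$, $Q_{0}\in Prime(B)$ by Corollary \ref{sym11}; since $(A\widehat{\otimes}B)/M'\cong (A/P_{0})\widehat{\otimes}(B/Q_{0})$ is simple and unital, the factors $A/P_{0}$ and $B/Q_{0}$ are forced to be simple and unital (a proper nonzero closed ideal $K$ of $A/P_{0}$ would produce the proper nonzero closed ideal $K\widehat{\otimes}(B/Q_{0})$), whence $P_{0}\in Max(A)$ and $Q_{0}\in Max(B)$.

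Granting this description, the forward implication is short. Let $I$ be a proper closed ideal of $A\widehat{\otimes}B$. By the weak Wiener property (Corollary \ref{sym5}), $I\subseteq P$ for some $P\in Prim(A\widehat{\otimes}B)$, and by Corollary \ref{sym11}, $P=A\widehat{\otimes}Q_{0}+P_{0}\widehat{\otimes}B$ with $P_{0}\in Prime(A)$ and $Q_{0}\in Prime(B)$. Since $P_{0}$ and $Q_{0}$ are proper closed ideals and $A$, $B$ are topologically Tauberian, there exist $M\in Max(A)$ and $N\in Max(B)$ with $P_{0}\subseteq M$ and $Q_{0}\subseteq N$. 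Then $I\subseteq P\subseteq A\widehat{\otimes}N+M\widehat{\otimes}B\in Max(A\widehat{\otimes}B)$, so $A\widehat{\otimes}B$ is topologically Tauberian.

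For the converse I would argue contrapositively. Suppose $A$ is not topologically Tauberian, and fix a proper closed ideal $I_{0}$ of $A$ contained in no maximal modular ideal. Then $I_{0}\widehat{\otimes}B$ is a proper closed ideal of $A\widehat{\otimes}B$, its quotient being the nonzero algebra $(A/I_{0})\widehat{\otimes}B$. If $I_{0}\widehat{\otimes}B\subseteq M'$ for some $M'\in Max(A\widehat{\otimes}B)$, write $M'=A\widehat{\otimes}N+M\widehat{\otimes}B$ with $M\in Max(A)$, $N\in Max(B)$ by the structural step. Each $a\otimes b$ with $a\in I_{0}$, $b\in B$ lies in $I_{0}\widehat{\otimes}B\subseteq M'$, so its image $\bar a\otimes\bar b$ in $(A\widehat{\otimes}B)/M'\cong (A/M)\widehat{\otimes}(B/N)$ is $0$; since the algebraic tensor product injects into $(A/M)\widehat{\otimes}(B/N)$ (the projective norm dominates the minimal $C^{*}$-norm), for each $a\in I_{0}$ the equality $\bar a\otimes\bar b=0$ for all $b\in B$ together with $N\subsetneq B$ forces $\bar a=0$, i.e.\ $a\in M$; hence $I_{0}\subseteq M$, a contradiction. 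So $I_{0}\widehat{\otimes}B$ sits in no maximal modular ideal, and $A\widehat{\otimes}B$ is not topologically Tauberian. The symmetric argument applied to $B$ shows that $A\widehat{\otimes}B$ topologically Tauberian in fact forces both $A$ and $B$ to be topologically Tauberian; a fortiori either $A$ or $B$ is.

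The step I expect to be the crux is the structural lemma, and within it the verification that $A\widehat{\otimes}N+M\widehat{\otimes}B$ is a maximal---not merely modular---ideal: this is exactly where the tension between the operator space projective norm $\|\cdot\|_{\wedge}$ and the minimal $C^{*}$-norm has to be exploited, through the injectivity of $i$ \cite{ranj}, the simplicity of the minimal tensor product of simple $C^{*}$-algebras, and (\cite{kumar}, Theorem 6). Everything after that --- the weak Wiener property, the description of $Prim(A\widehat{\otimes}B)$, and the extension of the prime ideals $P_{0},Q_{0}$ of the factors to maximal modular ideals --- is essentially bookkeeping.
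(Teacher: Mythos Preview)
Your argument is correct, but it follows a genuinely different route from the paper's proof. For the forward direction the paper does not use the structural description of $Max(A\widehat{\otimes}B)$ at all: it invokes (\cite{palmer1}, Theorem 7.3.4), which for a topologically Tauberian, completely regular, strongly semisimple Banach algebra gives $\overline{J_{\infty}(A)}=A$ and $\overline{J_{\infty}(B)}=B$, and then transfers this to $\overline{J_{\infty}(A\widehat{\otimes}B)}=A\widehat{\otimes}B$ following \cite{dutta}; Palmer's theorem (in the other direction) then yields the Tauberian property for $A\widehat{\otimes}B$. Your route through Corollary \ref{sym5}, Corollary \ref{sym11}, and the bijection $\phi$ is more direct and---as you may have noticed---never actually uses the completely regular or strongly semisimple hypotheses on $A$ and $B$; only the Tauberian property of the factors is needed, so your forward implication is in fact stronger than the one stated. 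For the converse both proofs rest on the description of $Max(A\widehat{\otimes}B)$, but the paper assumes \emph{neither} factor is Tauberian, picks bad ideals $I\subset A$ and $J\subset B$, and exhibits $A\widehat{\otimes}J+I\widehat{\otimes}B$ as a proper closed ideal missing every maximal modular ideal; your one-sided argument with $I_{0}\widehat{\otimes}B$ again gives more, namely that \emph{both} factors must be topologically Tauberian, not merely one of them.
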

\begin{pf}
Since $A$ and $B$ are topologically Tauberian, completely regular, strongly semisimple  $C^{*}$-algebras, so, by (~\cite{palmer1}, Theorem 7.3.4),
$\overline{J_{\infty}(A)}=A$ and $\overline{J_{\infty}(B)}=B$.  Now,  as done in (~\cite{dutta}, Theorem 2.07)
for the Banach space projective tensor product,   we have
$\overline{J_{\infty}(A \widehat{\otimes} B)}=A \widehat{\otimes} B$. Hence the result follows.

For the converse part, suppose that neither $A$ nor $B$ is topologically  Tauberian. So there exist proper closed ideal $I$ in $A$,
and $J$ in  $B$ such that $I\nsubseteq M$ for all $M\in Max(A)$, and $J\nsubseteq N$ for all $N\in Max(B)$. This further implies that
$A \widehat{\otimes} J+I \widehat{\otimes} B \nsubseteq A \widehat{\otimes} N+M \widehat{\otimes} B$. So we get a proper closed ideal
 $A \widehat{\otimes} J+I \widehat{\otimes} B $ of $A\widehat{\otimes} B$, which is not contained in any maximal modular ideal of
 $A\widehat{\otimes} B$, a contradiction. Hence the result.
\end{pf}

In particular, $Z(A)\widehat{\otimes} Z(B)$ is a completely regular  and topologically Tauberian, and Wiener  Banach $^{*}$-algebra.
\section{Quasi-Centrality of $A\widehat{\otimes} B$}
A Banach algebra is said to be quasi-central if no primitive ideal contains its center.  Equivalently, from (~\cite{quasi}, Proposition 1), for a weakly Wiener Banach algebra $A$, if   $A$ is quasi-central then any bounded approximate identity for $Z(A)$ is an approximate identity for $A$, conversely, if $A$ has an approximate identity each element of which belongs to $Z(A)$, then it is quasi-central.
\begin{thm}\label{sym3}
For $C^{*}$-algebras $A$ and $B$, the following  are equivalent \emph{:}\\
\emph{(i)} $A$ and $B$ are quasi-central.\\
\emph{(ii)} $A\widehat{\otimes} B$ is quasi-central.\\
\emph{(iii)} $A\widehat{\otimes} B=Z(A\widehat{\otimes}B)(A \widehat{\otimes}B)$.
\end{thm}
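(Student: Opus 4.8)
The plan is to prove the cycle of implications $(i)\Rightarrow(iii)\Rightarrow(ii)\Rightarrow(i)$. The inputs will be Theorem \ref{sym2} (which manufactures primitive ideals of $A\widehat{\otimes}B$ of the form $A\widehat{\otimes}Q+P\widehat{\otimes}B$), the characterization of quasi-centrality for weakly Wiener Banach algebras recalled just before the statement (\cite{quasi}, Proposition 1), the identification $Z(A\widehat{\otimes}B)=Z(A)\widehat{\otimes}Z(B)$ valid for $C^{*}$-algebras, and Cohen's factorization theorem; one assumes throughout that $A,B\neq\{0\}$.

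For $(i)\Rightarrow(iii)$: a $C^{*}$-algebra is weakly Wiener, so by (\cite{quasi}, Proposition 1) quasi-centrality of $A$ makes every bounded approximate identity of the commutative $C^{*}$-algebra $Z(A)$ an approximate identity for $A$; fix such a net $(e_{\lambda})\subseteq Z(A)$, and likewise $(f_{\mu})\subseteq Z(B)$. One then checks that $(e_{\lambda}\otimes f_{\mu})$, indexed by the product directed set, is a bounded approximate identity for $A\widehat{\otimes}B$: boundedness follows from $\|e_{\lambda}\otimes f_{\mu}\|_{\wedge}\le\|e_{\lambda}\|\,\|f_{\mu}\|\le 1$, convergence on an elementary tensor $a\otimes b$ from $\|e_{\lambda}a\otimes f_{\mu}b-a\otimes b\|_{\wedge}\le\|e_{\lambda}a-a\|\,\|f_{\mu}b\|+\|a\|\,\|f_{\mu}b-b\|\to 0$, hence on $A\otimes B$ by linearity and on $A\widehat{\otimes}B$ by density and the uniform bound. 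Since $e_{\lambda}\otimes f_{\mu}\in Z(A)\otimes Z(B)\subseteq Z(A\widehat{\otimes}B)$, applying Cohen's factorization theorem to $A\widehat{\otimes}B$, regarded as a Banach left module over the closed subalgebra $\overline{Z(A)\otimes Z(B)}\subseteq Z(A\widehat{\otimes}B)$, which carries the above bounded approximate identity and acts on $A\widehat{\otimes}B$ as an approximate identity, yields $A\widehat{\otimes}B=Z(A\widehat{\otimes}B)(A\widehat{\otimes}B)$, i.e. $(iii)$; if the product on the right is read with closure, no appeal to Cohen is needed.

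$(iii)\Rightarrow(ii)$ is immediate: if some $P\in Prim(A\widehat{\otimes}B)$ contained $Z(A\widehat{\otimes}B)$, then $P$, being an ideal, would contain $Z(A\widehat{\otimes}B)(A\widehat{\otimes}B)=A\widehat{\otimes}B$, contradicting the properness of primitive ideals; hence no primitive ideal contains the center. For $(ii)\Rightarrow(i)$ I argue by contraposition. Suppose, say, $A$ is not quasi-central (the case where $B$ is not quasi-central is symmetric), and pick $P_{0}\in Prim(A)$ with $Z(A)\subseteq P_{0}$ together with any $Q_{0}\in Prim(B)$. By Theorem \ref{sym2}, $P:=A\widehat{\otimes}Q_{0}+P_{0}\widehat{\otimes}B\in Prim(A\widehat{\otimes}B)$. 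Since $P_{0}$ is a closed ideal of $A$, $P_{0}\widehat{\otimes}B$ embeds as a closed ideal of $A\widehat{\otimes}B$ by the injectivity of $\widehat{\otimes}$ for closed ideals (\cite{kumar}, \cite{ranja}); and from $Z(A)\otimes Z(B)\subseteq P_{0}\otimes B$, taking closures in $A\widehat{\otimes}B$, one gets $Z(A\widehat{\otimes}B)=Z(A)\widehat{\otimes}Z(B)\subseteq P_{0}\widehat{\otimes}B\subseteq P$. Thus $P$ is a primitive ideal containing the center, so $A\widehat{\otimes}B$ is not quasi-central, as required.

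The main obstacle is $(ii)\Rightarrow(i)$: this is the only step that genuinely needs the harder half of the structure theory of $Prim(A\widehat{\otimes}B)$, namely Theorem \ref{sym2}, which guarantees that primitive ideals of the shape $A\widehat{\otimes}Q_{0}+P_{0}\widehat{\otimes}B$ actually occur, and it also leans on the identification $Z(A\widehat{\otimes}B)=Z(A)\widehat{\otimes}Z(B)$ and on the injective behaviour of $\widehat{\otimes}$ on closed ideals. Once the central bounded approximate identity of $(i)\Rightarrow(iii)$ has been produced, the remaining two implications are soft.
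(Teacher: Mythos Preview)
Your proof is correct and follows a genuinely different route from the paper's. The paper establishes (i)$\Leftrightarrow$(ii) and (ii)$\Leftrightarrow$(iii) separately: for (i)$\Rightarrow$(ii) it builds a central approximate identity $e_{\lambda}\otimes f_{\lambda}$ and invokes Corollary~\ref{sym5} together with (\cite{quasi}, Proposition~1); for (ii)$\Rightarrow$(i) it starts from a central approximate identity $\{t_{\nu}\}$ of $A\widehat{\otimes}B$ and manufactures central approximate identities for $A$ and $B$ by a slice-map computation $(T_{a}^{*}\phi\widehat{\otimes}I)(t_{\nu})$; and (ii)$\Leftrightarrow$(iii) goes through the essential part $(A\widehat{\otimes}B)_{e}$ and Cohen-type factorization. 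You instead run the cycle (i)$\Rightarrow$(iii)$\Rightarrow$(ii)$\Rightarrow$(i). The main novelty is your (ii)$\Rightarrow$(i): rather than the slice-map construction, you argue by contraposition, using Theorem~\ref{sym2} to exhibit a primitive ideal $A\widehat{\otimes}Q_{0}+P_{0}\widehat{\otimes}B$ that swallows $Z(A)\widehat{\otimes}Z(B)=Z(A\widehat{\otimes}B)$ once $Z(A)\subseteq P_{0}$. This is shorter and more conceptual, and it exploits the primitive-ideal machinery already set up in the paper; the trade-off is that it imports Theorem~\ref{sym2} as a dependency, whereas the paper's (ii)$\Rightarrow$(i) is self-contained modulo the center identification. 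Your (iii)$\Rightarrow$(ii) is likewise more elementary than the essential-part argument: it uses only that primitive ideals are proper two-sided ideals.
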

\begin{pf}
(i)$\Rightarrow $ (ii) : Suppose that $A$, $B$ are quasi-central. So $A$ and $B$ both have
approximate identities  each element of which belong to $Z(A)$ and $Z(B)$, respectively. As in ~\cite{sinc},
if there are  more than one approximate identity then we can always index them by the same partially ordered set.
So we let $\{e_{\lambda}\}$, $\{f_{\lambda}\}$ be the  central approximate identities for $A$ and $B$, respectively.
It follows from (~\cite{loy}, Proposition 8.1) that $\{e_{\lambda} \otimes f_{\lambda}\}$ is an approximate identity in  $A\widehat{\otimes} B$. Also, by~\cite{ranj},  there exists a contractive isomorphism $\theta$ from $Z(A)\widehat{\otimes} Z(B)$ onto $Z(A\widehat{\otimes} B)$, so
$e_{\lambda} \otimes f_{\lambda} \in Z(A\widehat{\otimes} B)$. Thus the result follows from Corollary \ref{sym5}.\\
(ii)$\Rightarrow $ (i) : Suppose that $A\widehat{\otimes} B$ is quasi-central. By Corollary \ref{sym5}, $A\widehat{\otimes} B$ is weakly Wiener algebra, so  $A\widehat{\otimes} B$ has central approximate identity. Clearly, $Z(A)\neq \{0\}$, and $Z(B)\neq \{0\}$.  Infact, if any one of them is zero, then $Z(A\widehat{\otimes} B)=\{0\}$ as the map  $\theta$ from $Z(A)\widehat{\otimes} Z(B)$ into $Z(A\widehat{\otimes} B)$ is bijective, which is not true. Now let $\{t_{\nu}\}$ be a central approximate identity for $A\widehat{\otimes} B$.  Let $I:B\to B$ denote the identity operator on $B$. For $\psi\in A^{*}$, consider a map $\psi\times I: A\times B\to B $ defined as\\
\hspace*{4 cm}  $\psi\times I(a,b)=\psi(a)b$. \\
Clearly, $\psi\times I$ is a bilinear map, so there exists a unique linear map $\psi\otimes I$ from $A\otimes B$ into $B$ such that $\psi\otimes I(a\otimes b)=\psi(a)b$. It is easy to see that $\psi\otimes I$    is  $\|\cdot\|_{\wedge}$- continuous, so can be extended to $A\widehat{\otimes} B$. Let the extension be denoted by $\psi\widehat{\otimes} I$.
For  $0\neq a\in  Z(A)$ fixed, define a mapping $T_{a}: A\to A$ as $T_{a}x=xa$. As in (~\cite{loy}, Theorem 8.2),  $\{(T_{a}^{*}\phi\widehat{\otimes}I )(t_{\nu})\}$ is an approximate identity for $B$. A similar argument shows that $A$  also has. Now we show that $(T_{a}^{*}\phi\widehat{\otimes}I )(t_{\nu})b=b(T_{a}^{*}\phi\widehat{\otimes}I )(t_{\nu})$ for all $b\in B$. Let  $t_{\nu}=\displaystyle\sum_{j=1}^{\infty}\lambda_{j}(a_{j}\otimes b_{j})\mu_{j}$ be a norm convergent sum in $A\widehat{\otimes} B$~\cite{effros}. Now using the fact that $t_{\nu}\in Z(A\widehat{\otimes}B)$, and $a\in Z(A)$, we have\\
\hspace*{2.35 cm} $(T_{a}^{*}\phi\widehat{\otimes}I )(t_{\nu})b=(\phi\widehat{\otimes}I )(t_{\nu})(a\otimes b) \\
\hspace*{4.73 cm}  =(\phi\widehat{\otimes}I )(a\otimes b)t_{\nu}=(\phi\widehat{\otimes}I )\displaystyle\sum_{j=1}^{\infty}\lambda_{j}(aa_{j}\otimes bb_{j})\mu_{j}\\
\hspace*{4.73 cm}=\displaystyle\sum_{j=1}^{\infty}\lambda_{j} \phi(aa_{j})bb_{j}\mu_{j}=\displaystyle\sum_{j=1}^{\infty}\lambda_{j} \phi(a_{j}a)bb_{j}\mu_{j}\\
\hspace*{4.73 cm}=b
\displaystyle\sum_{j=1}^{\infty}\lambda_{j} \phi(a_{j}a)b_{j}\mu_{j}=b
\displaystyle\sum_{j=1}^{\infty}\lambda_{j}T_{a}^{*}\phi(a_{j})b_{j}\mu_{j}\\
\hspace*{4.73 cm}=b(T_{a}^{*}\phi\widehat{\otimes}I )(t_{\nu})$.\\
So
$\{(T_{a}^{*}\phi\widehat{\otimes}I )(t_{\nu})\}$ is a central approximate identity for $B$.\\
(ii)$\Rightarrow $ (iii) : It is easy to verify that $A\widehat{\otimes} B$ is a $Z(A \widehat{\otimes}B)$-module. Since the map  $\theta$ from $Z(A)\widehat{\otimes} Z(B)$ onto $Z(A\widehat{\otimes} B)$ is contractive, so  $Z(A\widehat{\otimes} B)$ has a bounded approximate identity with bound 1. By (~\cite{loy}, Theorem 16.2), $(A\widehat{\otimes} B)_{e}=Z(A\widehat{\otimes}B)(A\widehat{\otimes} B)$, where $(A\widehat{\otimes} B)_{e}$ is the essential part of $A\widehat{\otimes} B$, and is given by $\{x\in A\widehat{\otimes} B| \displaystyle\lim_{\lambda} e_{\lambda}x=x\}$, $\{e_{\lambda}\}$ being an approximate identity of $Z(A\widehat{\otimes} B)$. So, it follows easily that $A\widehat{\otimes}B=(A \widehat{\otimes}B)_{e}$.\\
(iii) $\Rightarrow $ (ii) : As done in the above implication,  $(A\widehat{\otimes} B)_{e}=Z(A\widehat{\otimes} B)(A\widehat{\otimes} B)$, so $A\widehat{\otimes} B=(A\widehat{\otimes} B)_{e}$. Then clearly $A\widehat{\otimes} B$ is quasi-central.
\end{pf}

Using the fact that $Z(A\otimes_{h} B)=Z(A)\otimes_{h}Z(B)$~\cite{sinc}, the above result can be proved in a similar way for the Haagerup
tensor product.
%\begin{thm}\label{sym3}
%For $C^{*}$-algebras $A$ and $B$, the following  are equivalent \emph{:}\\
%\emph{(i)} $A$ and $B$ are quasi-central.\\
%\emph{(ii)} $A\otimes_{h} B$ is quasi-central.\\
%\emph{(iii)} $A\otimes_{h} B=Z(A\otimes_{h}B)(A \otimes_{h}B)$.
%\end{thm}
\begin{ex}
For a locally compact group $G$, let \emph{$C^{*}(G)$} and \emph{$C^{*}_{r}(G)$} be the group $C^{*}$-algebra, and the reduced group $C^{*}$-algebra  of $G$, respectively.  For locally compact groups $G_{1}$ and $G_{2}$,  \emph{$C^{*}(G_{1})\widehat{\otimes} C^{*}(G_{2})$} \emph{($C^{*}_{r}(G_{1})\widehat{\otimes} C^{*}_{r}(G_{2})$)} is quasi central if and only if $G_{1}$ and $G_{2}$ are \emph{[SIN]}-groups by \emph{(~\cite{losert}, Corollary 1.3)}. For a locally compact  Hausdorff  space $X$, $C_{0}(X)\widehat{\otimes} M_{n}$ is quasi-central. However, no proper closed ideal of $B(H)\widehat{\otimes} B(H)$, for an infinite dimensional  separable Hilbert space $H$, is quasi-central. Similar results hold for the Haagerup tensor product.
\end{ex}

For a unital Banach algebra $A$, let $U(A)=\{u\in A: \;u^{-1}\in A,\; \|u\|= \|u^{-1}\|= 1\}$, and $U_{0}(A)$ denote the subgroup generated by the set $\{e^{i h}:h\in Her(A)\}$, where $Her(A)$ denotes the set of all hermitian elements of $A$. For subsets  $S$ of $A$, and $T$ of $B$, we write $S\otimes T$ for the set $\{a\otimes  b:a\in S,\; b\in T\}$. For unital $C^{*}$-algebras $A$ and $B$, it was shown in
~\cite{kaij} that $U_{0}(A\otimes_{\gamma} B)= U_{0}(A)\otimes U_{0}(B)$, and $U(A\otimes_{h} B)= U(A)\otimes U(B)$~\cite{ranj}. The unitary group of $A\widehat{\otimes} B$
can be characterized in a similar way as follows:

\begin{prop}\label{uni1}
For unital $C^{*}$-algebras $A$ and $B$, $U(A\widehat{\otimes} B)= U(A)\otimes U(B)$.
\end{prop}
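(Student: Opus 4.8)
\begin{pf}
The plan is to prove the two inclusions of the asserted equality separately; here $U(A),U(B)$ are the unitary groups of the $C^{*}$-algebras $A,B$, and $i\colon A\widehat{\otimes}B\to A\otimes_{\min}B$ is the canonical injective unital $^{*}$-homomorphism~\cite{ranj} used in the proof of Theorem~\ref{sym1}. The inclusion $U(A)\otimes U(B)\subseteq U(A\widehat{\otimes}B)$ is immediate: for $u\in U(A)$, $v\in U(B)$ the element $u\otimes v$ is invertible in $A\widehat{\otimes}B$ with inverse $u^{*}\otimes v^{*}$, and since $\|\cdot\|_{\wedge}$ is a cross norm, $\|u\otimes v\|_{\wedge}=\|u\|\,\|v\|=1$ and likewise $\|u^{*}\otimes v^{*}\|_{\wedge}=1$; hence $u\otimes v\in U(A\widehat{\otimes}B)$.

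For the reverse inclusion, let $w\in U(A\widehat{\otimes}B)$, so $\|w\|_{\wedge}=\|w^{-1}\|_{\wedge}=1$. The first move is to push $w$ down: since $i$ is a unital contraction, $i(w)$ is invertible in the $C^{*}$-algebra $A\otimes_{\min}B$ with $i(w)^{-1}=i(w^{-1})$ and $\|i(w)\|_{\min},\|i(w)^{-1}\|_{\min}\le 1$. An invertible element of a $C^{*}$-algebra whose inverse is also a contraction is a unitary, so $\|i(w)\|_{\min}=1$ and $i(w)^{*}=i(w)^{-1}=i(w^{-1})=i(w^{*})$; injectivity of $i$ then gives $w^{*}=w^{-1}$. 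Thus $w$ is a genuine $^{*}$-unitary of $A\widehat{\otimes}B$ with $\|w\|_{\wedge}=1$, and it remains to show that any such $w$ is an elementary tensor. Here the norm hypothesis is essential at this point: there are $C^{*}$-unitaries of $A\otimes_{\min}B$ lying in the image of $i$ (e.g. the flip on $M_{2}\otimes M_{2}$) that are not elementary tensors, so the step genuinely uses that $\|w\|_{\wedge}=1$.

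My plan for this step is to use the slice maps. For $\phi\in A^{*}$, $\psi\in B^{*}$ the maps $\phi\otimes\mathrm{id}_{B}\colon A\widehat{\otimes}B\to B$ and $\mathrm{id}_{A}\otimes\psi\colon A\widehat{\otimes}B\to A$ are completely bounded of norm $\le\|\phi\|$ and $\le\|\psi\|$ respectively; writing $v_{\phi}=(\phi\otimes\mathrm{id}_{B})(w)$ and $u_{\psi}=(\mathrm{id}_{A}\otimes\psi)(w)$, one has $\|u_{\psi}\|\le\|w\|_{\wedge}=1$ for states $\psi$ and $\phi(u_{\psi})=\psi(v_{\phi})=(\phi\otimes\psi)(w)$ for all states $\phi,\psi$. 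Since the product functionals separate the points of $A\widehat{\otimes}B$ (via $i$ and the separation of points of $A\otimes_{\min}B$ by its slice maps), it is enough to show that all the slices $u_{\psi}$ lie in a single line $\mathbb{C}u$, $u\in A$ a unit vector: then, choosing $\phi_{0}\in A^{*}$ with $\phi_{0}(u)=1$ and setting $v=v_{\phi_{0}}\in B$, the product functionals of $w$ and of $u\otimes v$ coincide, so $w=u\otimes v$, and the relations $w^{*}w=ww^{*}=1\otimes1$ together with $\|u\otimes v\|_{\wedge}=\|u\|\,\|v\|=1$ force, after a harmless rescaling $u\mapsto cu$, $v\mapsto c^{-1}v$, that $u\in U(A)$ and $v\in U(B)$.

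The main obstacle is precisely this rank-one statement for the slices, that is, extracting from the extremal equality $\|w\|_{\wedge}=\|i(w)\|_{\min}=1$ the fact that $w$ cannot be a genuinely non-elementary element of $A\widehat{\otimes}B$. I would attack it by transporting the unitarity relations $w^{*}w=ww^{*}=1\otimes1$ through the GNS representations of the product states of $A\otimes_{\min}B$ so as to obtain a rigidity constraint on the slices, and combining this with the strict dominance $\|\cdot\|_{\min}\le\|\cdot\|_{\wedge}$ on $A\otimes B$, in the spirit of the argument for $U(A\otimes_{h}B)=U(A)\otimes U(B)$ in~\cite{ranj}.
\end{pf}
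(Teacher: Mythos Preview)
Your easy inclusion $U(A)\otimes U(B)\subseteq U(A\widehat{\otimes}B)$ is fine and matches the paper. The hard inclusion, however, is not proved: you reduce to showing that a $^{*}$-unitary $w\in A\widehat{\otimes}B$ with $\|w\|_{\wedge}=1$ is an elementary tensor, you set up slice maps, and then you explicitly name this rank-one statement as ``the main obstacle'' and only say you ``would attack it \ldots\ in the spirit of'' \cite{ranj}. That is an outline, not an argument; nothing in the paragraph forces the slices $u_{\psi}$ to be collinear, and the vague appeal to GNS representations together with $\|\cdot\|_{\min}\le\|\cdot\|_{\wedge}$ does not by itself yield the conclusion. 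As you yourself observe with the flip example, pushing down to $A\otimes_{\min}B$ lands you in an algebra where non-elementary unitaries abound, so the min-picture gives you no structural leverage and all the work is still ahead of you.

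The paper avoids this obstacle entirely by choosing a different intermediate algebra. Rather than the map into $A\otimes_{\min}B$ from Theorem~\ref{sym1}, it uses the canonical injective contractive homomorphism $i\colon A\widehat{\otimes}B\to A\otimes_{h}B$ into the Haagerup tensor product \cite{ranj}. The same norm juggling you did ($\|i(w)\|_{h}\le 1$, $\|i(w)^{-1}\|_{h}\le 1$, and $1\le\|i(w)\|_{h}\|i(w)^{-1}\|_{h}$) gives $i(w)\in U(A\otimes_{h}B)$. Now the result you yourself cite, $U(A\otimes_{h}B)=U(A)\otimes U(B)$ from \cite{ranj}, applies directly: $i(w)=a\otimes b$ with $a\in U(A)$, $b\in U(B)$, and injectivity of $i$ gives $w=a\otimes b$. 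The entire ``rank-one'' difficulty is thus absorbed into the already-established Haagerup case, and nothing further needs to be argued.
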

\begin{proof}
Suppose that $u\in U(A\widehat{\otimes} B)$. So $\|u\|_{\wedge}= 1$ and $\|u^{-1}\|_{\wedge}= 1$.  Clearly, $i(u)^{-1}=i(u^{-1})$, where  $i:A\widehat{\otimes} B \to A\otimes_{h} B $ is an injective homomorphism~\cite{ranj}. Since $i$ is a continuous map, so $\|i(u)\|_{h}\leq 1$ and $\|i(u)^{-1}\|_{h}\leq 1$. Also, $1 \leq \|i(u)\|_{h} \|i(u)^{-1}\|_{h} $. Thus $\|i(u)\|_{h}=1$. Similarly, $\|i(u)^{-1}\|_{h}=1$. So, by (~\cite{ranj}, Corollary 2), there exist $a\in U(A)$ and $b\in U(B)$ such that $i(u)=a\otimes b$. But $i$ is injective, so $u=a\otimes b$. Hence $U(A\widehat{\otimes} B)\subseteq U(A)\otimes U(B)$. Converse follows easily  from the fact that $\|\cdot \|_{\wedge}$ is a cross norm.
\end{proof}
%If $\alpha$ is a Haagerup norm or an operator space projective norm then
From~\cite{ranj}, hermitian elements of $A\otimes_{h} B$ (or $A\widehat{\otimes} B $) can be characterized
as $a\otimes 1+1\otimes b$, where $a\in Her(A)$ and $b\in Her(B)$. Hence, one can deduce easily that
 $U_{0}(A\otimes_{h} B)= U_{0}(A)\otimes U_{0}(B)$, and $U_{0}(A\widehat{\otimes} B)= U_{0}(A)\otimes U_{0}(B)$.

\end{document}